        \theoremstyle{plain}
        \newtheorem{theorem}{Theorem}[section]
        \newtheorem{corollary}[theorem]{Corollary}
        \newtheorem{lemma}[theorem]{Lemma}
        \newtheorem{proposition}[theorem]{Proposition}
        \theoremstyle{definition}
        \newtheorem{definition}[theorem]{Definition}
        \newtheorem{example}[theorem]{Example}
        \theoremstyle{remark}
        \newtheorem{remark}[theorem]{Remark}
        \newcommand{\itemref}[1]{\eqref{#1}}
        \newcommand{\Z}{\mathbb{Z}}
        \newcommand{\Orb}{\mathcal{O}}   % structure sheaf 
       \DeclareMathOperator{\spec}{Spec} % spectrum of a ring
        \newcommand{\red}[1]{{#1}_{\mathrm{red}}} % reduction of a space
           \newcommand{\Aff}{\mathbb{A}}
        \newcommand{\DCAT}{\mathsf{D}}
        \newcommand{\QCOH}{\mathsf{QCoh}}
\newcommand{\HS}[2]{\underline{\mathrm{HS}}_{{#1}/{#2}}}
\renewcommand{\subset}{\subseteq}
\numberwithin{equation}{section}
\newcommand{\qcsubscript}{\mathrm{qc}} % subscript
\newcommand{\DQCOH}[1][]{\DCAT_{\qcsubscript{#1}}} 
\newcommand{\spref}[1]{\href{http://stacks.math.columbia.edu/tag/#1}{#1}}
\newcommand{\labitem}[2]{%
\def\@itemlabel{(\textbf{#1})}
\item
\def\@currentlabel{\textbf{#1}}\label{#2}}
\title{Addendum: \'Etale d\'evissage, descent and pushouts of stacks}
\date{Nov 28, 2017}
\author[J. Hall]{Jack Hall}
\address{Department of Mathematics\\University of Arizona\\Tucson, AZ 85721\\USA}
\email{jackhall@math.arizona.edu}
\author[D. Rydh]{David Rydh}
\address{KTH Royal Institute of Technology\\Department of Mathematics\\SE-100 44 Stockholm\\Sweden}
\email{dary@math.kth.se}
\thanks{%
The first author was supported by the Australian Research Council DE150101799.
The second author was supported by the Swedish Research Council 2011-5599 and 2015-05554.}
\subjclass[2010]{Primary 14A20; secondary 14F20}
\keywords{\'etale neighborhood, distinguished square, d\'evissage, algebraic stack}
\newcommand{\cms}{\mathrm{cs}}
\newcommand{\STACK}{\mathbf{Stack}}
\newcommand{\stX}{\mathscr{X}}%
\newcommand{\sepdiag}{\mathrm{sep}_\Delta}%
\newcommand{\thickslash}{\mathbin{\!\!\pmb{\fatslash}}}
\theoremstyle{plain}
\newenvironment{customthm}[1]
  {\innercustomthm}
  {\endinnercustomthm}
\begin{document}
\begin{abstract}
  Using Nisnevich coverings and a Hilbert stack of stacky points, we prove \'etale d\'evissage results for non-representable \'etale and quasi-finite flat coverings. We give applications to absolute noetherian approximation of algebraic stacks and compact generation of derived categories.

\end{abstract}
\maketitle
\section{Introduction}
In \cite[Thm.~D \& 6.1]{MR2774654}, d\'evissage results were proved for representable quasi-finite flat and \'etale morphisms. We will show how these results may be extended to the non-representable situation using Nisnevich coverings and a Hilbert stack of stacky points.

We apply these results to weaken the separation hypotheses from the approximation results for algebraic stacks that appeared in~\cite{rydh-2009} and the compact generation result for derived categories of quasi-coherent sheaves on Deligne--Mumford stacks that appeared in~\cite[Thm.~A]{perfect_complexes_stacks}.

The results of this article have already been used in \cite{2017arXiv170502295H}. We also expect further applications arising from the work of \cite{AHR_lunafield,etale_local_stacks} on the local structure of stacks near points with linearly reductive stabilizers, where non-representable \'etale coverings naturally arise (see Remark \ref{R:application_LUNA}). 

Before stating our main result, we require some notation. Fix an algebraic stack~$S$. If 
$P_1$, $\ldots$, $P_r$ is a list of properties of morphisms of algebraic stacks over~$S$, let 
$\STACK_{P_1 , \ldots ,P_r /S}$ denote the full $2$-subcategory of the $2$-category of algebraic 
stacks over~$S$ whose objects are those $(x\colon X \to S)$ such that $x$ has properties 
$P_1$, $\ldots$, $P_r$. The following abbreviations will be used: \'et (\'etale), qff 
(quasi-finite flat), sep (separated), fp (finitely presented), rep (representable), and $\sepdiag$ (separated diagonal). Throughout, we let $\mathbf{E} \subseteq \STACK_{/S}$ be one of the following $2$-subcategories:
\[
\xymatrix{%
  \STACK_{\mathrm{repr,sep,fp,\acute{e}t}/S}\ar@{}[r]|-*[@]{\subseteq}\ar@{}[d]|-*[@]{\subseteq} &
  \STACK_{\mathrm{sep,fp,\acute{e}t}/S}\ar@{}[r]|-*[@]{\subseteq}\ar@{}[d]|-*[@]{\subseteq} &
  \STACK_{\mathrm{\sepdiag,fp,\acute{e}t}/S}\ar@{}[d]|-*[@]{\subseteq} \\
  \STACK_{\mathrm{repr,sep,fp,qff}/S}\ar@{}[r]|-*[@]{\subseteq} &
  \STACK_{\mathrm{sep,fp,qff}/S}\ar@{}[r]|-*[@]{\subseteq} &
  \STACK_{\mathrm{\sepdiag,fp,qff}/S}.
}%
\]
Our improvement of \cite[Thm.~D \& 6.1]{MR2774654} is the following theorem.
\begin{customthm}{D$'$}[\'Etale or quasi-finite flat d\'evissage]\label{T:qff_devissage}
  Let $S$ be a quasi-compact and quasi-separated algebraic stack and
  let $\mathbf{E}$ be as above. Let $(T' \xrightarrow{t} T) \in \mathbf{E}$ be surjective (resp.~surjective and representable) and let
  $\mathbf{D} \subseteq \mathbf{E}$ be a full $2$-subcategory satisfying the following three conditions: 
  \begin{enumerate}
  \item[(D1)] if $(X' \to X) \in \mathbf{E}$ is \'etale and $X \in
    \mathbf{D}$, then $X' \in \mathbf{D}$;
  \item[(D2)] if $(X' \to X) \in \mathbf{E}$ is proper (resp.~finite) and surjective
    and $X' \in \mathbf{D}$, then $X \in \mathbf{D}$; and
  \item[(D3)] if $(U \xrightarrow{i} X)$, $(X'\xrightarrow{f} X) \in
    \mathbf{E}$, where $i$ is an open immersion and $f$ is \'etale and an isomorphism
    over $X\setminus U$, then $X \in \mathbf{D}$ whenever $U$,
    $X'\in\mathbf{D}$.
  \end{enumerate}
If $T'\in \mathbf{D}$, then $T \in \mathbf{D}$. 
\end{customthm}
\begin{proof}
  Combine Theorem \ref{T:nis_pres_qff} with Lemma \ref{L:nis_dev}.
\end{proof}

Note that if $(X'\to X)\in \mathbf{E}$ is \'etale, then there is a canonical
factorization $X'\to X''\to X$ in $\mathbf{E}$ where the first morphism is
an \'etale gerbe and the second morphism is \'etale. If in addition $(X' \to X)$ is proper, then $X' \to X''$ is a proper \'etale gerbe and $X'' \to X$ is finite \'etale.

Note that if $T' \to T$ is representable, then it has separated
diagonal. In particular, the advantage of Theorem
\ref{T:qff_devissage} over \cite[Thm.~D]{MR2774654} is the removal of
the assumption of representability from $T' \to T$.

The ``Induction principle'' \cite[Tag \spref{08GL}]{stacks-project} for algebraic spaces is closely related to the d\'evissage results of Theorem \ref{T:qff_devissage}. When working with derived categories or K-theory, where locality results are often quite subtle, it is often advantageous to have the strongest possible criteria at your disposal (e.g., \cite{hall_balmer_cms}). For stacks with quasi-finite diagonal, we also obtain the following Induction principle. 
\begin{customthm}{E}[Induction principle for stacks with quasi-finite diagonal]\label{T:induction_qf_diag}
  Let $S$ be a quasi-compact and quasi-separated algebraic stack. Choose $\mathbf{E} \subseteq \STACK_{/S}$ as follows:
  \begin{itemize}
  \item if $S$ has quasi-finite diagonal, take $\mathbf{E} =
   \STACK_{\mathrm{\sepdiag,fp,qff}/S}$;
  \item if $S$ has quasi-finite and separated diagonal, take $\mathbf{E} =
    \STACK_{\mathrm{repr,sep,fp,qff}/S}$;
   \item if $S$ is Deligne--Mumford, take $\mathbf{E} =
    \STACK_{\mathrm{\sepdiag,fp,\acute{e}t}/S}$; and
  \item if $S$ is Deligne--Mumford with separated diagonal, take $\mathbf{E} =
\STACK_{\mathrm{repr,sep,fp,\acute{e}t}/S}$.
  \end{itemize}
  Let $\mathbf{D} \subseteq \mathbf{E}$ be a full $2$-subcategory satisfying
  the following properties:
  \begin{enumerate}
  \item[(I1)] if $(X' \to X) \in \mathbf{E}$ is an open immersion and $X \in
    \mathbf{D}$, then $X' \in \mathbf{D}$;
  \item[(I2)] if $(X' \to X) \in \mathbf{E}$ is finite and surjective,
    where $X'$ is an affine scheme, then $X \in \mathbf{D}$; and
  \item[(I3)] if $(U \xrightarrow{i} X)$, $(X'\xrightarrow{f} X) \in
    \mathbf{E}$, where $i$ is an open immersion and $f$ is \'etale and an isomorphism
    over $X\setminus U$, then $X \in \mathbf{D}$ whenever $U$,
    $X'\in\mathbf{D}$. 
  \end{enumerate}
  Then $\mathbf{D} = \mathbf{E}$. In particular, $S \in \mathbf{D}$.
\end{customthm}
\begin{proof}
  Combine Lemma \ref{L:nis_dev} with Theorem \ref{T:qfdiag_pres}.
\end{proof}
We wish to point out that Theorem \ref{T:induction_qf_diag} relies on the existence of coarse spaces for stacks with finite inertia (i.e., the Keel--Mori Theorem \cite{MR1432041,MR3084720}). Theorem \ref{T:induction_qf_diag}, in the case of a separated diagonal, was proved in \cite[App.~B]{hall_balmer_cms}.
\begin{remark}
  Extending Theorem \ref{T:qff_devissage} to covers with non-separated
  diagonals is possible. The most natural and useful formulation,
  however, requires $2$-stacks and the corresponding notion of
  $2$-Nisnevich coverings. This is analogous to the situation of
  representable but non-separated coverings, where non-representable
  Nisnevich coverings naturally appear. See Remark \ref{R:nonsep} for more details.
\end{remark}
\subsection*{Conventions}
We make no a priori separation assumptions on our algebraic stacks, just as in \cite{stacks-project}. 
\section{Residual gerbes as intersections}
Let $X$ be a quasi-separated algebraic stack (e.g., $X$ noetherian). By
\cite[Thm.~B.2]{MR2774654}, every point of $X$ is algebraic. That is, if $x\in
|X|$, then there is a quasi-affine monomorphism $\mathcal{G}_x \to X$ with
image $x$ such that $\mathcal{G}_x$ is an fppf gerbe, the \emph{residual
  gerbe}. Using the recent approximation result~\cite{rydh-2014}, which
depends on the original \'etale d\'evissage~\cite{MR2774654}, we obtain

\begin{lemma}\label{L:residual-gerbe-as-inv-limit}
Let $X$ be a quasi-separated algebraic stack and let $x\in |X|$ be a point.
The residual gerbe $\mathcal{G}_x$ is the limit of an
inverse system of immersions $j_\lambda\colon U_\lambda\hookrightarrow X$ of
finite presentation with affine bonding maps.
\end{lemma}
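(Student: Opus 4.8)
The plan is to combine the structural description of $\mathcal{G}_x$ with approximation of quasi-coherent data on the fixed stack $X$. Since $X$ is quasi-separated, \cite[Thm.~B.2]{MR2774654} gives that $\mathcal{G}_x \to X$ is a quasi-affine monomorphism; in particular it is representable, quasi-compact, separated, and $\mathcal{G}_x$ is itself quasi-compact, quasi-separated and reduced. I would first invoke the standard factorization of a quasi-affine morphism, $\mathcal{G}_x \hookrightarrow W \xrightarrow{p} X$ with $p = \underline{\spec}_X \mathcal{A}$ affine for the quasi-coherent algebra $\mathcal{A} = (\mathcal{G}_x \to X)_*\mathcal{O}_{\mathcal{G}_x}$ and the first map a quasi-compact open immersion. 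Because $\mathcal{G}_x$ is reduced and $\mathcal{G}_x \to X$ is a monomorphism, this open immersion is schematically dense and $\mathcal{O}_X \to \mathcal{A}$ is an epimorphism, i.e.\ $p$ is itself a monomorphism.

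The engine is then the approximation result of \cite{rydh-2014}: on the qcqs stack $X$ every quasi-coherent algebra is the filtered union of its finitely presented quasi-coherent subalgebras. Writing $\mathcal{A} = \operatorname{colim}_\lambda \mathcal{A}_\lambda$ in this way gives $W = \lim_\lambda W_\lambda$ with $W_\lambda = \underline{\spec}_X \mathcal{A}_\lambda \to X$ affine of finite presentation and with affine bonding maps (induced by algebra homomorphisms). The quasi-compact open immersion descends to some stage: there is a quasi-compact open $V_{\lambda_0} \subseteq W_{\lambda_0}$ pulling back to $\mathcal{G}_x$, and setting $V_\lambda = W_\lambda \times_{W_{\lambda_0}} V_{\lambda_0}$ for $\lambda \ge \lambda_0$ I obtain $\mathcal{G}_x = \lim_\lambda V_\lambda$ with each $V_\lambda \to X$ quasi-affine of finite presentation. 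Here the key point is that, the open part having been fixed at the single stage $\lambda_0$, each bonding map $V_\lambda \to V_\mu$ is a base change of the affine map $W_\lambda \to W_\mu$ along the open $V_\mu \hookrightarrow W_\mu$, and hence remains affine.

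The delicate step, which I expect to be the main obstacle, is to arrange that the $V_\lambda \to X$ are immersions rather than merely quasi-affine; this is exactly where the monomorphism hypothesis is spent. Using that $\mathcal{O}_X \to \mathcal{A}$ is an epimorphism, the plan is to pass to a cofinal subsystem on which each $\mathcal{O}_X \to \mathcal{A}_\lambda$ is already an epimorphism: the codiagonal $\mathcal{A}_\lambda \otimes_{\mathcal{O}_X} \mathcal{A}_\lambda \to \mathcal{A}_\lambda$ becomes an isomorphism in the colimit, so by the usual limit arguments for morphisms of finite presentation it is an isomorphism after enlarging $\lambda$, forcing every $W_\lambda \to X$, and hence every $V_\lambda \to X$, to be a finitely presented quasi-affine monomorphism. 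Such a monomorphism has locally closed image, by constructibility of the image together with Zariski's main theorem, and is therefore a quasi-compact immersion; this is precisely the stacky shadow of the elementary identity $\kappa(x) = \operatorname{colim}_{I,f}(A/I)_f$ taken over finitely generated $I \subseteq \mathfrak{p}$ and $f \notin \mathfrak{p}$, in which each approximant $\spec (A/I)_f \hookrightarrow \spec A$ is visibly a locally closed subscheme. The two remaining things I would check carefully are that the gerbe structure at the generic point is faithfully recovered in the limit, so that the limit is $\mathcal{G}_x$ and not a larger or nonreduced thickening, and that the passage to epimorphic approximations is compatible with the descent of the open $V_{\lambda_0}$, so that the resulting system of finitely presented immersions is genuinely cofiltered with affine bonding maps.
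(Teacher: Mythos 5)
Your route diverges from the paper's, and it breaks at exactly the step you flag as delicate. A finitely presented quasi-affine monomorphism with constructible, even closed, image need \emph{not} be an immersion: if $C$ is the nodal plane cubic and $\nu\colon\mathbb{A}^1\to C$ is its normalization with node-preimages $p,q$, then $\mathbb{A}^1\setminus\{q\}\to C$ is an affine, finitely presented, surjective monomorphism (it is universally injective and unramified, so its diagonal is a surjective open immersion, hence an isomorphism), yet it is not an immersion, since a surjective immersion onto a reduced scheme would be an isomorphism. So ``f.p.\ quasi-affine mono with locally closed image $\Rightarrow$ immersion'' is false, and Zariski's Main Theorem does not rescue it. There are also two soft spots upstream. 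The passage to epimorphic stages is not covered by ``the usual limit arguments'': those descend a property of a \emph{fixed} finitely presented morphism along a limit of bases, whereas here both source and target of the codiagonal $\mathcal{A}_\lambda\otimes_{\mathcal{O}_X}\mathcal{A}_\lambda\to\mathcal{A}_\lambda$ vary with $\lambda$; the property ``monomorphism'' does not in general pass from the limit to a cofinal subsystem (e.g.\ $\spec k=\varprojlim \spec k[x]/(x^2)$ for the constant system with transition $x\mapsto 0$ is a monomorphism over $\spec k$ while no stage is). And the opening claim that $\mathcal{O}_X\to(\mathcal{G}_x\to X)_*\mathcal{O}_{\mathcal{G}_x}$ is an epimorphism is asserted via schematic density rather than proved.

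The paper avoids all of this by producing the approximants as immersions from the outset. By \cite[Thm.~B.2]{MR2774654}, $\mathcal{G}_x$ is the fibre over $\spec\kappa(\pi(x))$ of a gerbe $\pi\colon Z\to\underline{Z}$ over an affine scheme, with $Z\hookrightarrow X$ locally closed and $x$ generic in $Z$; hence $\mathcal{G}_x$ is the intersection of affine open substacks $Z_\alpha=\pi^{-1}(\underline{Z}_\alpha)\subseteq Z$. Each closed immersion $Z_\alpha\hookrightarrow U_\alpha$ into a quasi-compact open $U_\alpha\subseteq X$ is then written as an intersection of \emph{finitely presented closed immersions} $Z_{\alpha\beta}\hookrightarrow U_\alpha$ using \cite{rydh-2014}, so every member of the system is an immersion by construction, and the only property transported from the limit to a finite stage is ``affine over $X$'', which genuinely does descend. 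If you want to salvage your approach, you must build the immersion property into the approximating system (e.g.\ by approximating ideal sheaves, as the paper does) rather than trying to deduce it from the monomorphism property at the end.
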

\begin{proof}
There is a locally closed integral substack $Z\hookrightarrow X$ such that $Z$
is a gerbe over an affine scheme $\underline{Z}$ and $x$ is the generic point
of $Z$~\cite[Thm.~B.2]{MR2774654}. Let $U\subseteq X$ be a quasi-compact open
neighborhood of $Z$ such that $Z\hookrightarrow U$ is a closed immersion.
Consider the inverse system $\{W_\lambda\hookrightarrow U\}_{\lambda\in
  \Lambda}$ of all finitely presented affine immersions
$W_\lambda\hookrightarrow U$ such that $x\in |W_\lambda|$.
We claim that the inverse limit, i.e., the
intersection, is $\mathcal{G}_x$.

Indeed, let $\pi\colon Z\to \underline{Z}$ denote the structure map of the
gerbe. Then $\pi(x)$ is the intersection of its affine open neighborhoods
$\underline{Z}_\alpha\subseteq \underline{Z}$. Thus
$\mathcal{G}_x=\pi^{-1}(\spec \kappa(\pi(x)))$ is the intersection of its
relatively affine open neighborhoods $Z_\alpha=\pi^{-1}(\underline{Z}_\alpha)$,
i.e., the open immersions $Z_\alpha\hookrightarrow Z$ are affine.  Moreover,
for a fixed $\alpha$, we may pick an open quasi-compact substack
$U_\alpha\subseteq U$ such that $Z_\alpha=Z\cap U_\alpha$. Since
$Z_\alpha\hookrightarrow U_\alpha$ is a closed immersion, we may write
$Z_\alpha\hookrightarrow U_\alpha$ as the intersection of closed immersions
$Z_{\alpha\beta}\hookrightarrow U_\alpha$ of finite
presentation~\cite{rydh-2014}. For sufficiently large $\beta$, the immersion
$Z_{\alpha\beta}\hookrightarrow U_\alpha\hookrightarrow U$ is affine, since the
limit $Z_\alpha\hookrightarrow U_\alpha\hookrightarrow U$ is
affine~\cite[Thm.~C]{rydh-2009}. Thus $Z_{\alpha\beta}=W_\lambda$ for some
$\lambda=\lambda(\alpha,\beta)$ for every $\alpha$ and every sufficiently large
$\beta$. It follows that
\[
\mathcal{G}_x\hookrightarrow\bigcap_{\lambda\in \Lambda}
W_\lambda\hookrightarrow \bigcap_{\alpha} Z_\alpha=\mathcal{G}_x
\]
and the result follows.
\end{proof}

\section{Nisnevich d\'evissage}\label{App:nis}
In this section, we consider Nisnevich coverings for quasi-separated
algebraic stacks. For schemes, this goes back to the work of
\cite{MR1045853} with the most famous applications due to
\cite{MR1813224}. In the setting of equivariant schemes this was
considered in \cite[\S2]{MR3431674}. It was also considered for
Deligne--Mumford stacks in \cite[\S\S 7-8]{MR2922391}.
The restriction to quasi-separated algebraic stacks is so that we can
give an intuitive definition in terms of residual gerbes.
%% While there are several
%% possible definitions of Nisnevich coverings for non-quasi-separated
%% algebraic stacks, we will not consider these in this article.

\begin{definition}
A morphism of quasi-separated algebraic stacks
$p\colon W \to X$ is a \emph{Nisnevich covering} if it is \'etale and for every
$x\in |X|$, there exists an $w \in |W|$ such that $p(w) = x$ and the induced
map of residual gerbes $\mathcal{G}_{w}\to \mathcal{G}_{x}$ is an isomorphism.
\end{definition}

Nisnevich coverings are stable under composition and base change.

\begin{example}\label{ex:nisnevich_scheme}
  Let $X$ be a quasi-compact and quasi-separated scheme. Then there exists an affine scheme $W$ and a Nisnevich covering $p\colon W \to X$. Indeed, taking $W=\coprod_{i=1}^n U_i$, where the $\{U_i\}$ form a finite affine open covering of $X$ gives the claim. More generally, this holds for quasi-compact and quasi-separated algebraic spaces~\cite[Prop.~5.7.6]{MR0308104}. 
\end{example}

Let $p \colon W \to X$ be a morphism of algebraic stacks. Recall that when
$p$ is not representable, then a section of $p$ need not be a monomorphism.
A \emph{monomorphic splitting sequence} for $p$ is a sequence of
quasi-compact open immersions
\[
\emptyset=X_0 \subseteq X_1 \subseteq \cdots \subseteq X_{r} = X
\]
such that $p$ restricted to $X_{i}\setminus X_{i-1}$, when given the induced reduced structure, admits a monomorphic section
for each $i=1$, $\dots$, $r$. In this situation, we say that $p$ has a
monomorphic splitting sequence of length $r$.

We have the following characterization of Nisnevich coverings, which is well-known for noetherian schemes \cite[Lem.~3.1.5]{MR1813224}.
\begin{proposition}\label{P:nisn_splitting}
  Let $X$ be a quasi-compact and quasi-separated algebraic stack
  and let $p\colon W \to X$ be a quasi-separated \'etale morphism.
  Then $p$ is a Nisnevich covering if and
  only if there exists a monomorphic splitting sequence for $p$.
\end{proposition}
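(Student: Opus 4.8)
The plan is to prove both directions of the equivalence, with the reverse direction being essentially formal and the forward direction carrying the real content.

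For the easy direction, suppose $p$ admits a monomorphic splitting sequence $\emptyset = X_0 \subseteq X_1 \subseteq \cdots \subseteq X_r = X$. To check the Nisnevich condition, fix $x \in |X|$ and let $i$ be minimal with $x \in |X_i|$, so that $x$ lies in the locally closed substack $X_i \setminus X_{i-1}$. By hypothesis $p$ restricted to this substack (with reduced structure) has a monomorphic section $\sigma$. I would argue that a monomorphic section of an \'etale morphism is an open immersion, and that over a residual gerbe an \'etale monomorphism that hits $x$ must induce an isomorphism $\mathcal{G}_w \to \mathcal{G}_x$ on residual gerbes: the point $w = \sigma(x) \in |W|$ then witnesses the Nisnevich condition at $x$. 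The only subtlety is checking that the residual gerbe of $X_i \setminus X_{i-1}$ at $x$ agrees with $\mathcal{G}_x$ computed in $X$, which holds because residual gerbes are insensitive to passing to a locally closed substack containing the point.

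For the forward direction, assume $p \colon W \to X$ is a Nisnevich covering; I want to produce the splitting sequence. The natural strategy is Noetherian-style induction on the topology of $X$, peeling off one locally closed stratum at a time, so I would reduce to constructing $X_{r-1} \subsetneq X_r = X$ together with a monomorphic section over the complementary stratum. Here is where Lemma \ref{L:residual-gerbe-as-inv-limit} enters: pick a point $x$, realize $\mathcal{G}_x$ as the inverse limit of finitely presented immersions $j_\lambda \colon U_\lambda \hookrightarrow X$ with affine bonding maps. Since $\mathcal{G}_w \to \mathcal{G}_x$ is an isomorphism for some $w \in |W|$, the section of $p$ defined over the limit $\mathcal{G}_x$ should spread out, by a standard limit/approximation argument, to a monomorphic section of $p$ over some $U_\lambda$. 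After shrinking, this gives a quasi-compact locally closed substack on which $p$ has a monomorphic section; taking its open closure and the complementary open provides the top step of the sequence, and one iterates.

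The main obstacle is the spreading-out step in the forward direction: one has a (monomorphic) section only over the limit $\mathcal{G}_x$, and must descend it to a genuine section over a finitely presented neighborhood $U_\lambda$, preserving both the \'etale-locally-split structure and monomorphicity, in the \emph{non-representable} setting where sections of $p$ need not be monomorphisms a priori. Controlling this requires the affine bonding maps from Lemma \ref{L:residual-gerbe-as-inv-limit} so that the relevant $\Hom$ and $\Isom$ functors commute with the limit, and it requires checking that monomorphicity of the section --- equivalently, that $p$ induces an isomorphism on residual gerbes along the section --- is an open condition that persists after spreading out. Ensuring the induction terminates (i.e.\ that the process exhausts $|X|$ in finitely many quasi-compact steps) is routine given quasi-compactness and quasi-separatedness of $X$, but must be stated carefully so that each stratum $X_i \setminus X_{i-1}$ is cut out by a quasi-compact open immersion as the definition demands.
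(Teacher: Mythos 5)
Your proposal is correct and follows essentially the same route as the paper: the forward direction uses Lemma \ref{L:residual-gerbe-as-inv-limit} to write $\mathcal{G}_x$ as a limit of finitely presented immersions with affine bonding maps, spreads the monomorphic section out to a finitely presented neighborhood by standard approximation results (the paper cites \cite[Prop.~B.2 (i) and B.3 (ii)]{rydh-2009} for exactly the step you flag as the main obstacle), and then uses constructibility and quasi-compactness to extract a finite filtration by quasi-compact opens whose strata lie in the $Z_x$'s. The converse direction, which the paper leaves implicit, is handled correctly by your observation that an \'etale monomorphic section is an open immersion and hence induces an isomorphism on residual gerbes.
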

\begin{proof}
Let $x\in |X|$ be a point. Then there exists an immersion $Z_x\hookrightarrow
X$ of finite presentation, such that $x\in |Z_x|$, and a monomorphic section of
$p|_{Z_x}$. Indeed, there is a monomorphic section of $p|_{\mathcal{G}_x}$
which extends to a monomorphic section of $p|_{Z_x}$ by
Lemma~\ref{L:residual-gerbe-as-inv-limit} and~\cite[Prop.~B.2 (i) and B.3
  (ii)]{rydh-2009}.

The $Z_x$ are constructible and we can thus cover $X$ by a finite
number of the $Z_x$'s. We can thus filter $X$ by a sequence of quasi-compact
open substacks $X_i$ such that $X_i\setminus X_{i-1}$ is contained in some
$Z_x$. That is, we have obtained a monomorphic splitting sequence.
\end{proof}

The following lemma outlines the key benefits of the Nisnevich topology: it is generated by particularly simple coverings (cf.~\cite[Prop.~1.4]{MR1813224}).
\begin{lemma}[Nisnevich d\'evissage]\label{L:nis_dev}
  Let $S$ be a quasi-compact and quasi-separated algebraic stack and
  let $\mathbf{E}\subseteq \STACK_{\mathrm{fp,\acute{e}t}/S}$ be a full
  $2$-subcategory containing all open immersions and closed under fiber products
  (e.g., one of the categories listed in the introduction). Let
  $\mathbf{D} \subseteq \mathbf{E}$ be a full $2$-subcategory such that
  \begin{enumerate}
  \item[(N1)] if $(X' \to X) \in \mathbf{E}$ is an open immersion and
    $X \in \mathbf{D}$, then $X' \in \mathbf{D}$; and
  \item[(N2)] if $(U \xrightarrow{i} X)$, $(X'\xrightarrow{f} X) \in
    \mathbf{E}$, where $i$ is an open immersion and $f$ is an isomorphism
    over $X\setminus U$, then $X \in \mathbf{D}$ whenever $U$,
    $X'\in\mathbf{D}$.
  \end{enumerate}
  If $p \colon W \to X$ is a Nisnevich covering in $\mathbf{E}$ and $W \in
  \mathbf{D}$, then $X \in \mathbf{D}$.
\end{lemma}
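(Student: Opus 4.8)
The plan is to induct on the length $r$ of a monomorphic splitting sequence for $p\colon W \to X$, which exists by Proposition~\ref{P:nisn_splitting} since $p$ is a Nisnevich covering between quasi-compact and quasi-separated stacks. The base case $r=0$ forces $X = X_0 = \emptyset$, and the empty stack lies in $\mathbf{D}$ (it is an open immersion into any object of $\mathbf{D}$, or handled directly). For the inductive step, I would set $U = X_{r-1}$ and consider the top stratum $X \setminus U = X_r \setminus X_{r-1}$, on which $p$ admits a monomorphic section after giving it the reduced structure.

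First I would reduce from the reduced stratum to an open neighborhood. The section over $(X\setminus U)_{\mathrm{red}}$ is a monomorphism into $W$; I would want to spread it out to a monomorphic section of $p$ over some quasi-compact open substack $V \subseteq X$ containing $X \setminus U$. The idea is that a monomorphic section over the closed-in-$(X\setminus U)$ reduced stratum extends to a section over a neighborhood, and since being a monomorphism and being an isomorphism onto its image are constructible/open conditions for étale morphisms, after shrinking $V$ the image of this section is an \emph{open} substack $X' \hookrightarrow W$ via which $p$ restricts to an isomorphism $X' \xrightarrow{\sim} V$. This produces the data needed for (N2): the open immersion $i\colon U \cap V \hookrightarrow V$ together with $X' \to V$, which is étale and an isomorphism over $V \setminus (U \cap V) = X \setminus U$.

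Next I would assemble the pieces. By the inductive hypothesis applied to the Nisnevich covering $W \times_X U \to U$ (which inherits a splitting sequence of length $r-1$, and whose source lies in $\mathbf{D}$ by (N1) since it is an open substack of $W \in \mathbf{D}$), I get $U \in \mathbf{D}$; whence $U \cap V \in \mathbf{D}$ by (N1). The stack $X'$ is an open substack of $W$, so $X' \in \mathbf{D}$ by (N1) as well. Applying (N2) to the distinguished square over $V$ yields $V \in \mathbf{D}$. Finally, $X$ is covered by the two open substacks $U$ and $V$ with $U \cup V = X$; I would invoke (N2) once more — or rather observe that the pair $(U \hookrightarrow X,\, V \hookrightarrow X)$ with $V \to X$ an open immersion that is an isomorphism over $X \setminus U$ — to conclude $X \in \mathbf{D}$ from $U, V \in \mathbf{D}$.

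\emph{The hard part will be} the spreading-out step in the non-representable setting: extending the monomorphic section from the reduced top stratum to an honest monomorphic (equivalently, open-immersion-valued) section over a quasi-compact open neighborhood $V$, and verifying that the resulting $X' \to V$ is genuinely an isomorphism over the stratum rather than merely étale. This is precisely where quasi-separatedness of $X$ and the constructibility techniques behind Proposition~\ref{P:nisn_splitting} are needed, together with the fact (noted after Theorem~\ref{T:qff_devissage}) that an étale section need not be a monomorphism, so care is required to keep the section monomorphic after extension. Once $V$, $U$, and $X'$ are arranged as objects of $\mathbf{E}$ fitting a distinguished square, conditions (N1) and (N2) do the rest mechanically.
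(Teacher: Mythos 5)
Your overall scaffolding---induction on the length of a monomorphic splitting sequence from Proposition~\ref{P:nisn_splitting}, reduction to $U=X_{r-1}\in\mathbf{D}$ via the inductive hypothesis and (N1), and a final application of (N2)---matches the paper. The genuine gap is the spreading-out step, which you correctly flag as the hard part but which in fact cannot be carried out: a monomorphic section of an \'etale morphism over the closed reduced stratum $Z=\red{(X\setminus U)}$ does \emph{not} in general extend to a section over any Zariski open neighborhood $V$ of $Z$ in $X$. Take $X=\mathbb{A}^1$, $U=\mathbb{A}^1\setminus\{0\}$ and $W=\{0\}\amalg(\mathbb{A}^1\setminus\{0\})$ with $p$ the obvious map: this is a Nisnevich covering with a splitting sequence of length $2$, the section over $Z=\{0\}$ picks out the isolated point of $W$, and any section over an open $V\ni 0$ would have to pull back the open subset $\{0\}\subseteq W$ to an open subset of $V$ equal to $\{0\}$, which is absurd. (Sections of \'etale maps extend over henselian or infinitesimal neighborhoods, not Zariski ones.) Consequently your $X'$, defined as the image of the extended section, need not exist, and the claim that $p$ restricts to an isomorphism $X'\xrightarrow{\sim}V$ is both unobtainable and stronger than what (N2) requires.

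The fix---and the paper's actual argument---is to not extend the section at all. Since $s\colon Z\to p^{-1}(Z)$ is a monomorphic section of an \'etale morphism, it is an open immersion, so $p^{-1}(Z)\setminus s(Z)$ is closed in $p^{-1}(Z)$ and hence in $W$. One sets
\[
X' \;=\; W\setminus\bigl(p^{-1}(Z)\setminus s(Z)\bigr)\;=\;p^{-1}(U)\cup s(Z),
\]
a quasi-compact open substack of $W$, so $X'\in\mathbf{D}$ by (N1). The induced map $f\colon X'\to X$ lies in $\mathbf{E}$, is \'etale, and over $X\setminus U$ it is \'etale, universally injective and surjective, hence an isomorphism. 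A single application of (N2) to the pair $(U\hookrightarrow X,\;X'\to X)$ finishes the induction; your auxiliary open $V$ and the second Zariski gluing of $U$ and $V$ then become unnecessary. In the example above this recipe gives $X'=W$ itself, which is \'etale over $\mathbb{A}^1$ and an isomorphism over the origin---precisely the input (N2) needs, even though no section exists on any neighborhood of the origin.
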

\begin{proof}
  By Proposition \ref{P:nisn_splitting}, there is a sequence of quasi-compact
  open immersions:
  \[
  \emptyset=X_0 \subseteq X_1 \subseteq \cdots \subseteq X_{r}=X,
  \]
  such that $f$ restricted to $X_{i}\setminus X_{i-1}$, when given the
  induced reduced structure, admits a monomorphic section for $i=1$,
  $\dots$, $r$. We will prove the result by induction on $r\geq 0$. If $r=0$, then the result is trivial.

  If $r>0$, let $U=X_{r-1}$; then $U$ admits a splitting
  sequence of length $r-1$. By the inductive hypothesis and (N1), we
  may thus assume that $U \in \mathbf{D}$. If $Z=\red{(X\setminus U)}$, then
  the restriction of $p$ to $Z$ admits a section $s$, which is a
  quasi-compact open immersion. It follows that $X'=p^{-1}(U)\cup s(Z)
  = W\setminus (p^{-1}(Z) \setminus s(Z))$ is a quasi-compact open
  subset of $W$. Let $f\colon X' \to X$ be the induced morphism;
  then $X'\in \mathbf{D}$ and $f$ is an isomorphism over $X\setminus
  U$. By (N2), the result follows.
\end{proof}
\section{Presentations of algebraic stacks with finite stabilizers}
The following theorem removes the separated diagonal assumption from \cite[Thm.~B.5]{hall_balmer_cms}.  It will be crucial for the proofs of Theorems \ref{T:induction_qf_diag} and \ref{T:hilbqfb}. 
\begin{theorem}\label{T:qfdiag_pres}
  Let $X$ be a quasi-compact and quasi-separated algebraic stack with quasi-finite
  diagonal. Then there exist morphisms of algebraic stacks
  \[
  V \xrightarrow{v} W \xrightarrow{p} X
  \]
  such that
  \begin{itemize}
  \item $V$ is an affine scheme;
  \item $v$ is finite, faithfully flat and of finite presentation; and
  \item $p$ is a Nisnevich covering of finite presentation with separated diagonal.
  \end{itemize}
 In addition,
  \begin{enumerate}
  \item\label{TI:qfdiag_pres:sep-diag} if $X$ has separated diagonal, then it can be arranged that
    $p$ is representable and separated; and
  \item if $X$ is Deligne--Mumford, then it can be arranged that $v$ is \'etale.
  \end{enumerate}
\end{theorem}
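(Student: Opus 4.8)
The plan is to reduce the statement to a construction around a single point of $X$ and then to assemble the local pieces into one Nisnevich covering. For the assembly, suppose that for every $x\in|X|$ we have produced a finitely presented étale morphism $p_x\colon W_x\to X$ with separated diagonal that is an isomorphism on residual gerbes over $x$, together with a finite, faithfully flat and finitely presented morphism $v_x\colon V_x\to W_x$ with $V_x$ an affine scheme. As in the proof of Proposition~\ref{P:nisn_splitting}, the set of points of $X$ over which $p_x$ induces an isomorphism of residual gerbes is constructible and contains $x$, so by quasi-compactness finitely many $p_{x_1},\dots,p_{x_n}$ already satisfy the covering condition. Setting $W=\coprod_i W_{x_i}$, $V=\coprod_i V_{x_i}$ and $v=\coprod_i v_{x_i}$, the scheme $V$ is affine (a finite disjoint union of affine schemes), $v$ is finite, faithfully flat and of finite presentation, and $p=\coprod_i p_{x_i}\colon W\to X$ is a finitely presented étale Nisnevich covering with separated diagonal. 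Everything therefore reduces to the local construction.

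Here the idea is to begin on the residual gerbe and spread outward. Since $X$ has quasi-finite diagonal, the residual gerbe $\mathcal{G}_x\to\spec\kappa(x)$ is a gerbe banded by a finite flat group scheme $G$. Any fppf section of the gerbe, restricted to a closed point, already exists over a finite extension $\kappa'/\kappa(x)$; after base change to $\kappa'$ the gerbe becomes neutral, and its canonical atlas is a torsor under $G_{\kappa'}$, hence finite and flat. Composing with the finite flat morphism $\mathcal{G}_x\times_{\kappa(x)}\kappa'\to\mathcal{G}_x$, we obtain a finite, faithfully flat and finitely presented morphism $\spec\kappa'\to\mathcal{G}_x$ from an affine scheme. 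When $X$ is Deligne--Mumford the band is étale, the extension $\kappa'$ may be taken separable, and this cover is finite étale; this gives the refinement~(2).

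Next I would spread this cover out and extend it to an étale neighborhood. By Lemma~\ref{L:residual-gerbe-as-inv-limit} we may write $\mathcal{G}_x=\varprojlim_\lambda U_\lambda$ as an inverse limit of finitely presented immersions $U_\lambda\hookrightarrow X$ with affine bonding maps, so by standard limit methods the finite flat cover of $\mathcal{G}_x$ descends, for some $\lambda$, to a finite, faithfully flat and finitely presented morphism $V_\lambda\to U_\lambda$ with $V_\lambda$ affine (finite étale in the Deligne--Mumford case). The crucial step is then to pass from this cover of the locally closed substack $U_\lambda\hookrightarrow X$ to a genuine étale neighborhood: one must produce a finitely presented étale $p_x\colon W_x\to X$ that is an isomorphism on $\mathcal{G}_x$, together with a finite, faithfully flat and finitely presented $v_x\colon V_x\to W_x$ from an affine scheme extending $V_\lambda\to U_\lambda$. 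I expect this to be the main obstacle. The key point is that any stack admitting such an atlas automatically has finite inertia, so one may invoke the Keel--Mori theorem \cite{MR1432041,MR3084720} to produce coarse spaces and realise $W_x$ as an étale neighborhood over which the finite flat atlas deforms, using the approximation and étale-neighborhood techniques of \cite{rydh-2009,MR2774654}; maintaining finite flatness and affineness of the atlas while controlling the separatedness of the diagonal of $p_x$ is the delicate part.

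Finally, for the refinement~(1), when $X$ has separated diagonal one arranges in addition that $W_x$ is an algebraic space and $p_x$ is separated: the separatedness of the diagonal allows the gerbe to be rigidified over the étale neighborhood so that no stacky structure remains, recovering and extending \cite[Thm.~B.5]{hall_balmer_cms}. Assembling as in the first paragraph then yields a representable and separated $p$.
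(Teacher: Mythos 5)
Your reduction to a pointwise construction and your analysis on the residual gerbe are fine: since $X$ has quasi-finite diagonal, $\mathcal{G}_x$ is a gerbe with finite stabilizer and does admit a finite flat cover by the spectrum of a finite extension of the residue field (étale in the Deligne--Mumford case), and by Lemma~\ref{L:residual-gerbe-as-inv-limit} this spreads out to a finite flat cover of some finitely presented locally closed substack $U_\lambda\hookrightarrow X$. But the step you flag as ``the main obstacle'' is a genuine gap, and the tools you propose for it do not close it. You cannot invoke the Keel--Mori theorem to manufacture the étale neighborhood $W_x\to X$: Keel--Mori requires \emph{finite} inertia, $X$ only has quasi-finite diagonal, and the stack that would have finite inertia is precisely the $W_x$ you have not yet built --- the argument is circular. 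Nor do the étale-neighborhood techniques of \cite{MR2774654,rydh-2009} apply as stated, since they concern \emph{representable} covers, whereas $p_x\colon W_x\to X$ must be non-representable (it restricts to an isomorphism onto the stacky residual gerbe $\mathcal{G}_x$); removing exactly this representability restriction is the point of the present paper, so appealing to those results begs the question. There is no general mechanism for deforming a finite flat atlas from a locally closed substack to an étale neighborhood, and your proposal does not supply one.

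The paper's proof avoids this extension problem entirely by working with a moduli space: starting from a representable quasi-finite flat presentation $u\colon U\to X$ with $U$ affine (\cite[Thm.~7.1]{MR2774654}), it takes $W=\HS{U}{X}^{\mathrm{\acute{e}t}}$, the open locus of the Hilbert stack parameterizing representable étale morphisms to $U$. This $W$ is étale over $X$ \emph{by construction} and carries a universal finite flat family $v\colon V\to W$; the finite flat cover $U_x\to\mathcal{G}_x$ is the identity section $\mathcal{G}_x\to W$, and its extension to a neighborhood is automatic because $W\to X$ is locally of finite presentation (this is Proposition~\ref{P:nisn_splitting}). The separated-diagonal refinement~(1) is then obtained not by ``rigidifying the gerbe'' as you suggest --- which would destroy the atlas you need --- but by passing to the open substack $\underline{\mathrm{Hilb}}_{U/X}^{\mathrm{open}}$ of open-and-closed immersions, which is representable and separated over $X$ when $u$ is separated; Zariski's Main Theorem and Keel--Mori (now legitimately applicable, since $W$ has finite inertia) then make $V$ affine. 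If you want to salvage your outline, you should replace your deformation step with this moduli-theoretic construction; as written, the core of the theorem is not proved.
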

\begin{proof}
  The proof is similar to \cite[Prop.~6.11]{MR3084720}, \cite[Thms.~6.3 \& 7.2]{MR2774654} and \cite[Thm.~B.5]{hall_balmer_cms}. 

  By \cite[Thm.~7.1]{MR2774654}, there is an affine scheme $U$ and a
  representable, quasi-finite, faithfully flat and finitely presented morphism $u\colon U
  \to X$. The Hilbert stack $\HS{U}{X}=\amalg_{d\geq 0}\mathscr{H}^d_{U/X}\to X$
  parametrizing quasi-finite representable morphisms to $U$
  is algebraic and has quasi-affine---in particular, separated---diagonal
  \cite[Thm.~4.4]{MR2821738}.
  Let $p\colon W=\HS{U}{X}^{\mathrm{\acute{e}t}} \to X$ be the open substack of
  the Hilbert stack that parameterizes representable \'etale morphisms to $U$.
  Since $u$ is flat, it is readily seen that $p\colon W \to X$ is \'etale.

  We now prove that $p$ is a Nisnevich covering. Let $x\in |X|$ be a point with
  residual gerbe $\mathcal{G}_x$. The restriction $u_x\colon U_x\to
  \mathcal{G}_x$ is finite and flat. Thus, the identity $U_x\to U_x$
  corresponds to a section $\mathcal{G}_x\to W$. It is readily seen that this
  is a monomorphic section (e.g., by considering the open substack $H\subseteq
  W$ below).

  After replacing $W$ by a quasi-compact open subset containing the sections
  of a monomorphic splitting sequence (Proposition~\ref{P:nisn_splitting}),
  we obtain a finitely presented Nisnevich covering $p\colon W \to X$. Let
  $v\colon V \to W$ be the universal family, which is finite (even \'etale if
  $u$ is \'etale), flat and of finite presentation. Then there is a
  $2$-commutative diagram
  \[
    \xymatrix{V \ar[r]^q \ar[d]_v & U \ar[d]^u \\ W \ar[r]^p & X,}
  \]
  where $p$ and $q$ are \'etale.
  After shrinking $W$, we may assume that $v$ is surjective.
  Although $p$ and $q$ need neither be
  representable nor separated, we saw that $p$, and hence $q$, have separated
  diagonals. It follows that $V$ has separated diagonal, and hence so has
  $W$~\cite[Lem.~A.4]{MR2774654}. We may replace $X$ by $W$
  and assume that $X$ has separated diagonal.

  When $X$ has separated diagonal, the presentation $u$ is separated.
  Consider the substack $H=\underline{\mathrm{Hilb}}_{U/X}^{\mathrm{open}}
  \subseteq W$ parameterizing open and closed immersions into $U$ over
  $X$. In general $H$ is not algebraic but since $u$ is separated it
  is an open substack of $W$ and $H\to S$ is representable and
  separated~\cite[Thm.~4.1]{MR2821738}. We may thus replace
  $W$ with a quasi-compact open subset of $H$ containing the sections. Then we
  obtain a commutative diagram as above where $p$ and $q$ are \'etale,
  representable and separated. By Zariski's Main Theorem
  \cite[Thm.~A.2]{MR1771927}, $q$ is quasi-affine. By
  \cite[Thm.~5.3]{MR3084720}, $W$ has a coarse space $\pi\colon W \to W_{\cms}$
  such that $W_{\cms}$ is a quasi-affine scheme and $\pi\circ v$ is affine
  (and integral). By
  Example \ref{ex:nisnevich_scheme}, we may further reduce to the
  situation where $W_{\cms}$ is an affine scheme. Then $V$ is affine and
  the result follows.
\end{proof}
\begin{remark}
A special case of~\itemref{TI:qfdiag_pres:sep-diag} is when $X$ has finite
inertia. Then one can give an alternative proof of Theorem~\ref{T:qfdiag_pres}
using that $X$ admits a coarse space $X\to X_{\cms}$ and that
Nisnevich-locally on $X_{\cms}$, we can find a finite flat presentation of
$X$. Indeed, one immediately reduces to the case where $X_{\cms}$ is local
henselian and then a quasi-finite flat presentation $U\to X$ splits as
$U=V\amalg V'$ where $V\to X$ is finite and surjective.
\end{remark}
\section{Hilbert stack of stacky points}
Let $f\colon X \to S$ be a morphism of algebraic stacks. Let
$\HS{X}{S}$ be the \emph{Hilbert stack} of $f$. The Hilbert stack of
$f$ parameterizes quasi-finite and representable morphisms to $X$ that
are proper over the base. In \cite{hallj_dary_g_hilb_quot,MR3148551},
it was proved that $\HS{X}{S}$ was algebraic when $f$ has quasi-finite
and separated diagonal. The proof of this relies on the results of
\cite{MR3148551}, whose methods are quite involved and may not be so
familiar to the reader.

In this article, we will only need a small piece of $\HS{X}{S}$: the
open substack $\HS{X}{S}^{\mathrm{qfb}}$ consisting of those families
that are quasi-finite (though not necessarily representable) over the
base. We will call this the \emph{Hilbert stack of stacky
  points}. Using Nisnevich coverings, we will be able to deduce the
algebraicity of the Hilbert stack of stacky points from the well-known
algebraicity result in the case where $f$ is separated, which is much
easier (e.g, \cite{MR2233719}, \cite[Thm.~9.1]{MR3589351} and
\cite[Thm.~A(i)]{hallj_dary_g_hilb_quot}).
\begin{theorem}\label{T:hilbqfb}
  If $f\colon X \to S$ is a morphism of algebraic stacks with
  quasi-compact and separated diagonal, then
  $\HS{X}{S}^{\mathrm{qfb}}$ is an algebraic stack with quasi-affine
  diagonal over $S$. If $f$ is locally of finite presentation (resp.\
  is separated), then $\HS{X}{S}^{\mathrm{qfb}}$ is locally of finite
  presentation (resp.\ has affine diagonal).
\end{theorem}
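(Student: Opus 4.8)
The plan is to prove algebraicity of $\HS{X}{S}^{\mathrm{qfb}}$ by reducing, via an étale (indeed Nisnevich) covering of the source $X$, to the case where $X$ is covered by a separated stack, for which algebraicity is already known. The key observation is that a quasi-finite-over-the-base family $Z \to T$ (with $Z \to X_T$ proper and quasi-finite) is an object of the Hilbert stack of stacky points, and that such a $Z$ need only be examined after pulling back along a Nisnevich covering. So first I would invoke Theorem~\ref{T:qfdiag_pres}: since $f$ has quasi-compact and separated (in particular quasi-finite) diagonal, I may choose, at least locally on $S$, a Nisnevich covering $p \colon W \to X$ of finite presentation with separated diagonal, together with a finite faithfully flat presentation $v \colon V \to W$ by an affine scheme. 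The point of this is that $W$ (and $V$) are separated over $S$, so the separated case of the theorem applies to them.

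\textbf{Descent along the Nisnevich covering.} The heart of the argument is to show that the Hilbert stack of stacky points of $f$ can be reconstructed from the Hilbert stacks of the separated morphisms $W \to S$ (and their fiber products $W \times_X W \to S$, etc.). Concretely, given a family $Z \to T$ representing a $T$-point of $\HS{X}{S}^{\mathrm{qfb}}$, I would pull $Z$ back along $W \to X$ to obtain $Z_W \to T$, which is a family in the \emph{separated} Hilbert stack $\HS{W}{S}^{\mathrm{qfb}}$ (separated because $W \to S$ is separated, since $W$ has separated diagonal and the structure map factors appropriately). The datum of $Z$ should then be equivalent to a descent datum on $Z_W$ relative to the groupoid $W \times_X W \rightrightarrows W$; that is, $\HS{X}{S}^{\mathrm{qfb}}$ is realized as a $2$-categorical equalizer (limit) of the diagram of separated Hilbert stacks associated to the Nisnevich hypercover. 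Since $p$ is étale, the simplicial object $W^{\times_X \bullet}$ is a smooth (even étale) groupoid presentation of $X$, and algebraic stacks are closed under such limits along representable, separated, algebraic diagrams. This yields algebraicity of $\HS{X}{S}^{\mathrm{qfb}}$.

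\textbf{The diagonal and finiteness properties.} For the claim that the diagonal is quasi-affine, I would argue that the diagonal of $\HS{X}{S}^{\mathrm{qfb}}$ is an open immersion into (a base change of) an $\Isom$-stack for pairs of families, and that quasi-affineness descends along the Nisnevich covering from the separated case, where the diagonal is known to be quasi-affine by \cite[Thm.~4.4]{MR2821738}. The refinements—locally of finite presentation when $f$ is, and affine diagonal when $f$ is separated—follow by tracking these properties through the construction: finite presentation is preserved because $p$ and $v$ are of finite presentation and the separated Hilbert stacks are locally of finite presentation over $S$; affineness of the diagonal in the separated case comes directly from \cite[Thm.~4.1]{MR2821738} (the Hilbert scheme of a separated morphism having affine diagonal), which then propagates through the descent.

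\textbf{The main obstacle} I anticipate is verifying that forming $\HS{(-)}{S}^{\mathrm{qfb}}$ genuinely commutes with the étale descent/limit along the Nisnevich groupoid—i.e., that a quasi-finite \emph{non-representable} family over $X$ is exactly a descent datum of separated families over $W$. The subtlety is precisely the non-representability: because $p \colon W \to X$ need not be representable (it is merely étale with separated diagonal), a section of $p$ need not be a monomorphism, and one must check that the monomorphic splitting sequence supplied by Proposition~\ref{P:nisn_splitting} genuinely suffices to glue families without introducing spurious automorphisms or failing effectivity of descent. Controlling this gluing—ensuring that the Nisnevich-local data patches to an honest global family proper and quasi-finite over $T$—is where the real work lies, and where the Nisnevich (as opposed to merely étale) nature of the covering, through the existence of monomorphic sections over each stratum, is essential.
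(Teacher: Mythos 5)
Your opening moves match the paper's: reduce to $X$ quasi-compact with quasi-finite and separated diagonal, invoke Theorem~\ref{T:qfdiag_pres} to produce a finitely presented, representable, separated Nisnevich covering $p\colon W \to X$ with $W$ admitting a finite flat presentation by an affine scheme, and use the known algebraicity of $\HS{W}{S}^{\mathrm{qfb}}$. But the transfer step --- the heart of the proof --- does not work as you describe. You propose to send a $T$-point $(Z \to X\times_S T)$ of $\HS{X}{S}^{\mathrm{qfb}}$ to its pullback $Z_W = Z\times_{X\times_S T}(W\times_S T)$ and to recover $\HS{X}{S}^{\mathrm{qfb}}$ as a stack of descent data over the \v{C}ech nerve of $p$. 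This fails at the first step: $Z_W \to T$ factors as $Z_W \to Z \to T$, where $Z_W \to Z$ is a base change of the \'etale, non-proper morphism $p$, so $Z_W \to T$ is in general not proper and hence $Z_W$ is \emph{not} an object of $\HS{W}{S}^{\mathrm{qfb}}(T)$. (Take $W$ a disjoint union of two open substacks of $X$ and $Z = X\times_S T$ with $X \to S$ finite.) The Hilbert stack is simply not contravariantly functorial in $X$ along non-proper coverings, so the descent diagram you want does not exist; and even if it did, a $2$-limit of a cosimplicial diagram of algebraic stacks indexed by a covering of $X$ (rather than of $S$ or of the moduli stack itself) is not automatically algebraic. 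The obstacle you flag at the end --- effectivity of gluing and monomorphic sections --- is not where the problem lies.

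The paper goes the opposite way, using the \emph{covariant} functoriality of the Hilbert stack: composition with $p$ defines a morphism $\HS{W}{S}^{\mathrm{qfb}} \to \HS{X}{S}^{\mathrm{qfb}}$ (this preserves properness over $T$ because the total space $Z'$ of the family is unchanged), and its fiber over $(Z \to X\times_S T)$ is the stack of lifts of $Z$ through $W\times_S T$, i.e.\ the Weil restriction $\mathbf{R}_{Z/T}((W\times_S T)\times_{X\times_S T} Z)$. Proposition~\ref{P:weilr} --- proved for exactly this purpose --- shows such Weil restrictions are representable, separated, finitely presented Nisnevich coverings, so algebraicity and the diagonal conditions descend from $\HS{W}{S}^{\mathrm{qfb}}$ to $\HS{X}{S}^{\mathrm{qfb}}$. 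Your proposal contains no analogue of this Weil-restriction analysis. Two smaller inaccuracies: $W \to S$ is not separated (only $W \to X$ is, while $X \to S$ merely has separated diagonal), so the input is the algebraicity of $\HS{W}{S}^{\mathrm{qfb}}$ for $W$ with \emph{finite diagonal} via \cite[Thm.~A(i)]{hallj_dary_g_hilb_quot}; and quasi-affineness of the diagonal is obtained from $\Delta_{\mathbf{R}_{Z/S}(U)} \simeq \mathbf{R}_{Z/S}(\Delta_{U/Z})$ rather than by descending it along the covering.
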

To prove Theorem \ref{T:hilbqfb} we first prove a result on Weil
restrictions.

\begin{proposition}\label{P:weilr}
  Let $Z\to S$ be a quasi-finite, proper and flat morphism of finite
  presentation between quasi-separated algebraic stacks.
  If $U\to Z$ is a quasi-separated morphism
  with quasi-finite diagonal, then the Weil restriction
  $\mathbf{R}_{Z/S}(U)\to S$ is a quasi-separated algebraic stack.
  Moreover, if $U\to Z$ is
\begin{enumerate}
\item\label{PI:weilr:Nis} a Nisnevich covering; or
\item\label{PI:weilr:etale} \'etale; or
\item\label{PI:weilr:rep} representable; or
\item\label{PI:weilr:rep+sep} representable and separated; or
  \item\label{PI:weilr:qc} quasi-compact,
\end{enumerate}
then so too is $\mathbf{R}_{Z/S}(U)\to S$. If $U \to Z$ has separated diagonal, then $\mathbf{R}_{Z/S}(U) \to S$ has quasi-affine diagonal. 
\end{proposition}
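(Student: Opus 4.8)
The plan is to realize the Weil restriction as the relative section stack $\mathbf{R}_{Z/S}(U)=\underline{\Sec}_{Z/S}(U)$, whose $T$-points (for $T\to S$) are the sections of the base change $U\times_Z Z_T\to Z_T$, and to exploit two formal features of this construction: it commutes with arbitrary base change on $S$, and $\mathbf{R}_{Z/S}$ is right adjoint to $(-)\times_S Z$. Since $Z\to S$ is quasi-finite, proper and flat of finite presentation, these make Weil restriction along $Z\to S$ well behaved, and cohomology and base change hold for $Z_T\to T$. I would organize the argument in three stages: first the algebraicity and quasi-separatedness; then the inheritance of the listed properties \itemref{PI:weilr:Nis}--\itemref{PI:weilr:qc}; and finally the quasi-affine diagonal, which is the key computation and also supplies the diagonal representability needed to close the algebraicity argument.

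For algebraicity I would verify Artin's criteria for $\mathbf{R}_{Z/S}(U)$. Descent and limit preservation are formal from the finite presentation hypotheses on $U\to Z$ and $Z\to S$. The deformation--obstruction theory of a section $\sigma\colon Z_T\to U$ is governed by the hypercohomology along the proper, flat, finitely presented morphism $Z_T\to T$ of the pullback $\sigma^*L_{U/Z}$; by cohomology and base change this is computed by a perfect complex on $T$, which yields the effectivity and openness-of-versality axioms. Because Artin's argument also requires the diagonal (that is, the $\underline{\Isom}$ stacks) to be algebraic, I would run an induction on the ``diagonal complexity'' of $U\to Z$: first establish algebraicity of the relevant Isom stacks, which are Weil restrictions of the lower-complexity morphism $\Delta_{U/Z}$, and only then conclude that $\mathbf{R}_{Z/S}(U)$ itself is algebraic.

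The inherited properties I would check functorially. If $U\to Z$ is representable, then $\mathbf{R}_{Z/S}(U)(T)$ is set-valued for every $T\to S$, so $\mathbf{R}_{Z/S}(U)\to S$ is representable; quasi-compactness follows from boundedness of the sections. \'Etaleness is the infinitesimal lifting criterion, which holds because $\sigma^*L_{U/Z}$ vanishes when $U\to Z$ is \'etale. The Nisnevich case \itemref{PI:weilr:Nis} is the most delicate: for $s\in|S|$ I must produce a section over the residual gerbe $\mathcal{G}_s$ inducing an isomorphism of residual gerbes, which amounts to lifting, over the finite gerbe $Z\times_S\mathcal{G}_s$, the pointwise section guaranteed by the Nisnevich hypothesis on $U\to Z$; here I would use the residual-gerbe-as-intersection description of Lemma~\ref{L:residual-gerbe-as-inv-limit} to spread the section out, and Example~\ref{ex:nisnevich_scheme} for the reduction to an affine base.

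The quasi-affine diagonal is the technical heart. When $U\to Z$ has separated diagonal, the morphism $\Delta_{U/Z}\colon U\to U\times_Z U$ is quasi-finite (by the quasi-finite diagonal hypothesis) and separated, hence quasi-affine by Zariski's Main Theorem \cite[Thm.~A.2]{MR1771927}. The diagonal of $\mathbf{R}_{Z/S}(U)\to S$ sends a section to itself twice, and its fibre over a pair $(\sigma_1,\sigma_2)$ of $T$-sections is the Weil restriction along $Z_T\to T$ of the relative Isom stack $\underline{\Isom}_{Z_T}(\sigma_1,\sigma_2)\to Z_T$, namely the pullback of $\Delta_{U/Z}$ along $(\sigma_1,\sigma_2)\colon Z_T\to U\times_Z U$. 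Thus the diagonal of $\mathbf{R}_{Z/S}(U)\to S$ is, fibrewise, a Weil restriction of the quasi-affine morphism $\Delta_{U/Z}$. The main obstacle, and the point requiring the most care, is the general fact that Weil restriction along a finite, flat, finitely presented morphism carries quasi-affine (resp.\ quasi-compact) morphisms to quasi-affine (resp.\ quasi-compact) morphisms; granting this, the diagonal is quasi-affine, and the same computation with $\Delta_{U/Z}$ merely quasi-finite and quasi-compact yields the quasi-separatedness asserted in the first sentence of the proposition.
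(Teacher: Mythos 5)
There are two genuine gaps in your plan, and they sit exactly at the points the paper has to work hardest.

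First, your algebraicity argument via Artin's criteria uses hypotheses the proposition does not grant. Nothing in the statement makes $U \to Z$ locally of finite presentation---only $Z \to S$ is of finite presentation---so $\mathbf{R}_{Z/S}(U)$ need not be limit preserving over $S$, and $S$ is not assumed noetherian or excellent; Artin-type criteria therefore do not apply as you invoke them (your phrase ``the finite presentation hypotheses on $U\to Z$'' assumes something that is not there). The paper never runs Artin's criteria: it quotes the known separated case (\cite[Thm.~2.3(v)]{hallj_dary_g_hilb_quot}, algebraic with affine diagonal, no finite presentation hypothesis on $U$), uses Theorem~\ref{T:qfdiag_pres} to produce a Nisnevich covering $p\colon W \to U$ with $W$ of finite diagonal and $p$ of separated diagonal, shows via parts \itemref{PI:weilr:Nis}--\itemref{PI:weilr:rep+sep} that $\mathbf{R}_{Z/S}(p)$ is an \'etale Nisnevich covering of $\mathbf{R}_{Z/S}(U)$ by the algebraic stack $\mathbf{R}_{Z/S}(W)$, and descends algebraicity along it. Your ``induction on diagonal complexity'' is closer in spirit to how the cited background results are proved than to anything needed here.

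Second, and most concretely: your ``general fact'' that Weil restriction along a \emph{finite} flat morphism preserves quasi-compact and quasi-affine morphisms does not apply, because $Z \to S$ is only quasi-finite and proper, and since it is not assumed representable it need not be finite (e.g.\ $Z = B_S G$ for a finite \'etale group scheme $G$ is quasi-finite, proper and flat but not finite). Reducing from quasi-finite proper to finite is precisely the hard content of the paper's proof of \itemref{PI:weilr:qc}: one stratifies $S$ so that a quasi-finite flat presentation $Z' \to Z$ becomes finite over $S$, forms $Z''$, $Z'''$ and the corresponding Weil restrictions $R'$, $R''$, $R'''$ (quasi-compact by \cite[Prop.~3.8]{MR2821738}, the finite case), and then recovers $R=\mathbf{R}_{Z/S}(U)$ from these by two cartesian squares using fppf descent. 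Nothing in your proposal plays this role; your remark that quasi-compactness in the representable case ``follows from boundedness of the sections'' is again exactly item \itemref{PI:weilr:qc}, asserted rather than proved. The same issue infects your diagonal computation: the identification $\Delta_{\mathbf{R}_{Z/S}(U)} \simeq \mathbf{R}_{Z/S}(\Delta_{U/Z})$ and the quasi-affineness of $\Delta_{U/Z}$ via Zariski's Main Theorem agree with the paper, but you then need preservation of quasi-affineness under Weil restriction along the quasi-finite \emph{proper} morphism $Z_T \to T$, which the paper gets from \cite[Thm.~2.3(iii)]{hallj_dary_g_hilb_quot}, not from the finite flat case.
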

  If $U \to Z$ has separated diagonal, it can be deduced that
  $\mathbf{R}_{Z/S}(U)$ is algebraic with quasi-affine diagonal using
  \cite[Thm.~2.3(vi)]{hallj_dary_g_hilb_quot}. This relies on
  \cite{MR3148551}, however. We will avoid the reliance on \cite{MR3148551}
  and the separated
  diagonal assumption when $Z\to S$ is quasi-finite using
  a simple bootstrapping process and Theorem \ref{T:qfdiag_pres}.

\begin{proof}[Proof of Proposition~\ref{P:weilr}]
  A standard argument shows that
  properties~\itemref{PI:weilr:etale}, \itemref{PI:weilr:rep}, and \itemref{PI:weilr:rep+sep} are preserved by taking Weil restrictions whenever the Weil restrictions in question exist, cf.\ \cite[Rem.~2.5]{hallj_dary_g_hilb_quot}. To prove~\itemref{PI:weilr:Nis} when $\mathbf{R}_{Z/S}(U) \to S$ is already known to be a quasi-separated algebraic stack, 
  we may replace $S$ with a residual gerbe $\mathcal{G}_s$ for some point
  $s\in |S|$. Then
  $|Z|$ is finite and discrete. Thus, if $U\to Z$ is a
  Nisnevich covering, then $U\to Z$ has a monomorphic section. It follows that
  there is a monomorphic section $S\to \mathbf{R}_{Z/S}(U)$. 
  
  We make the following well-known observation: if
  $u\colon U_1 \to U_2$ is a morphism of algebraic stacks over $Z$,
  then the base change of
  $\mathbf{R}_{Z/S}(u) \colon \mathbf{R}_{Z/S}(U_1) \to
  \mathbf{R}_{Z/S}(U_2)$ along a morphism
  $T \to \mathbf{R}_{Z/S}(U_2)$, corresponding to a $Z$-morphism
  $Z\times_S T\to U_2$, is isomorphic to
  $\mathbf{R}_{Z\times_S T/T}((Z\times_S T)\times_{U_2}
  U_1)$. It follows that if $P$ is a property of morphisms
  of algebraic stacks that is smooth-local on the target, then
  $\mathbf{R}_{Z/S}(u)$ is $P$ if
  $\mathbf{R}_{Z/S}(U) \to S$ is $P$ for all
  affine $S$ and all $U \to Z$ satisfying $P$.

  We next address the algebraicity. If $U \to Z$ is separated
  (resp.~separated and representable), then
  $\mathbf{R}_{Z/S}(U) \to S$ is well-known to be algebraic with
  affine diagonal (resp.~representable and separated), see
  \cite[Thm.~2.3(v)]{hallj_dary_g_hilb_quot}.

  The algebraicity is smooth local on $S$, so we may assume that $S$
  is an affine scheme. Every section of $U\to Z$ factors through a
  quasi-compact open subset and Weil-restrictions of open substacks are open
  substacks, hence we may assume that $U$ is quasi-compact.
  Theorem~\ref{T:qfdiag_pres} implies that there
  is a Nisnevich covering $p\colon W \to U$ such that $W$ has finite
  diagonal and $W \to U$ has separated diagonal. By the case already
  considered, $\mathbf{R}_{Z/S}(W) \to S$ is algebraic with affine
  diagonal. Consider the induced morphism
  $\mathbf{R}_{Z/S}(p) \colon \mathbf{R}_{Z/S}(W) \to
  \mathbf{R}_{Z/S}(U)$.

  If $U \to Z$ has separated diagonal, then
  Theorem~\ref{T:qfdiag_pres} even says that we can choose the
  Nisnevich covering $p\colon W \to U$ to be separated and
  representable. The separated case already considered and
  \itemref{PI:weilr:Nis}--\itemref{PI:weilr:rep+sep} now establishes
  that $\mathbf{R}_{Z/S}(p)$ is a representable and separated
  Nisnevich covering. Hence, $\mathbf{R}_{Z/S}(U) \to S$ is
  algebraic. To see that it has quasi-affine diagonal, we note that
  $\mathbf{R}_{Z/S}(U) \times_S \mathbf{R}_{Z/S}(U) \cong
  \mathbf{R}_{Z/S}(U\times_Z U)$. In particular,
  $\Delta_{\mathbf{R}_{Z/S}(U)} \simeq
  \mathbf{R}_{Z/S}(\Delta_{U/Z})$. Since $\Delta_{U/Z}$ is
  quasi-affine, $\mathbf{R}_{Z/S}(\Delta_{U/Z})$ is quasi-affine
  \cite[Thm~2.3(iii)]{hallj_dary_g_hilb_quot}.

  If $U \to Z$ does not have separated diagonal, then
  $p\colon W \to U$ still has separated diagonal. Hence, by the cases
  already considered, $\mathbf{R}_{Z/S}(p)$ is algebraic and a Nisnevich \'etale
  covering. It follows that $\mathbf{R}_{Z/S}(U)$ is algebraic, but we
  still need to prove that it is quasi-separated. Repeating the
  argument above on separation conditions for
  $\mathbf{R}_{Z/S}(U) \to S$, the quasi-separatedness follows
  from~\itemref{PI:weilr:qc}.

It remains to show~\itemref{PI:weilr:qc}: the Weil restriction $R:=\mathbf{R}_{Z/S}(U)\to S$ is
quasi-compact if $U\to Z$ is quasi-compact. This claim is smooth local on $S$
so we may assume that $S$ is affine. Pick a quasi-finite flat presentation
$Z'\to Z$ and let $Z''=Z'\times_Z Z'$ and $Z'''=Z'\times_Z Z'\times_Z Z'$.  To
show that $R$ is quasi-compact, we may replace $S$ with a stratification. We
may thus assume that $Z'\to S$ is finite.
Then $R':=\mathbf{R}_{Z'/S}(U\times_Z Z')\to S$,
$R'':=\mathbf{R}_{Z''/S}(U\times_Z Z'')\to S$
and $R''':=\mathbf{R}_{Z'''/S}(U\times_Z Z''')\to S$
are quasi-compact and quasi-separated algebraic stacks
\cite[Prop.~3.8 (xiii) \& (xix)]{MR2821738}. If we define $P$ (descent data without
the descent condition) by the cartesian square
\[
\xymatrix{
R'\ar[d]_{(\pi_1^*,\pi_2^*)} & P\ar[l]\ar[d] \\
R''\times_S R'' & R''\ar[l]_-{\Delta}\ar@{}[ul]|\square
}
\]
then there is a cartesian square
\[
\xymatrix{
P\ar[d]_{\tau} & R\ar[l]\ar[d] \\
I_{R'''} & R'''\ar[l]_{e}\ar@{}[ul]|\square
}
\]
by fppf descent~\cite[Rmk.~4.4]{MR2357471}. It follows that
$R$ is quasi-compact.
\end{proof}

We can now prove Theorem \ref{T:hilbqfb}.
\begin{proof}[Proof of Theorem \ref{T:hilbqfb}]
  We may assume that $S$ is an affine scheme. If $X^{\mathrm{qf}} \subseteq X$ denotes the open substack where $X$ has a quasi-finite diagonal, then it is clear that $\HS{X^{\mathrm{qf}}}{S}^{\mathrm{qfb}} =\HS{X}{S}^{\mathrm{qfb}}$; thus we may assume that $X$ has quasi-finite and separated diagonal. Further standard reductions permit us to assume that $X$ is also quasi-compact. By Theorem \ref{T:qfdiag_pres}, there is a finitely presented, representable, and separated Nisnevich covering $p\colon W \to X$ such that $W$ admits a finite flat and finitely presented covering by an affine scheme $V$.  If $X$ is separated, we instead let $W=X$. In either case, $W$ has finite diagonal. By \cite[Thm.~A(i)]{hallj_dary_g_hilb_quot}, $\HS{W}{S}^{\mathrm{qfb}}$ is an algebraic stack with affine diagonal.

Let $T$ be an affine scheme and let $(Z \to X\times_S T) \in \HS{X}{S}^{\mathrm{qfb}}(T)$. It is well-known that the following diagram is $2$-cartesian:
\[
\xymatrix{\mathbf{R}_{Z/T}((W\times_S T)\times_{X\times_S T} Z) \ar[r] \ar[d] & T  \ar[d] \\ \HS{W}{S}^{\mathrm{qfb}} \ar[r] & \HS{X}{S}^{\mathrm{qfb}}, } 
\]
and we conclude that $\HS{W}{S}^{\mathrm{qfb}} \to \HS{X}{S}^{\mathrm{qfb}}$ is
a finitely presented, representable, and separated Nisnevich covering (Proposition~\ref{P:weilr}). The theorem follows.
\end{proof}
\begin{example}
  Theorem \ref{T:hilbqfb} is false if $X \to S$ has non-separated
  diagonal. This is similar to the main result of \cite{MR2369042}
  (cf.~\cite{MR3148551}). For an explicit example, consider
  $S=\Aff^1_k$, where $k$ is a field, and let $G = (\Z/2\Z)_S$. Let
  $H \subseteq G$ be the \'etale subgroup scheme which is the
  complement of the non-trivial element lying over the origin in
  $S$. The quotient $G/H$ is non-separated (it is just the line with
  the doubled origin). Let $X=B_S(G/H)$. Let
  $S_n = \spec (k[x]/x^{n+1})$ and $\hat{S} = \spec k[[x]]$. The
  natural map $(B_SG)\times_S S_n \to X\times_S {S_n}$ is
  representable (even an isomorphism), but there is no extension of
  this to a representable morphism $Y \to X\times_S {\hat{S}}$, where
  $Y \to \hat{S}$ is proper and flat.
\end{example}
\begin{remark}\label{R:nonsep}
  If $X \to S$ is non-separated, then the natural object to consider
  is the $2$-stack parameterizing not necessarily representable
  morphisms $Z \to X$ that are quasi-finite and flat over the
  base. This $2$-stack ends up being algebraic because the proof of
  Theorem \ref{T:hilbqfb} holds verbatim. If $X\to S$ is flat and
  we restrict to the
  $2$-substack parameterizing those $Z \to X$ that are also \'etale,
  then this is an \'etale $2$-stack. In particular, it is an \'etale
  $2$-gerbe over a $1$-stack. Unfortunately, this $1$-stack does not
  carry a universal
  family, which makes applying the result difficult. In particular, to
  prove d\'evissage results for morphisms with non-separated
  diagonals, it appears necessary to enter the world of higher
  stacks, cf.\ Remark~\ref{R:nonsep2}.
\end{remark}
\section{Non-representable presentations}
The following theorem combines and extends \cite[Prop.~6.11]{MR3084720} and
\cite[Thm.~6.3]{MR2774654}. It makes crucial use of Theorem \ref{T:hilbqfb}. 
\begin{theorem}\label{T:nis_pres_qff}
  Let $X$ be a quasi-compact and quasi-separated algebraic stack and
  let $u \colon U \to X$ be a quasi-finite and
  faithfully flat morphism of finite presentation with separated diagonal. Then there exists a
  commutative diagram of algebraic stacks
  \[
  \xymatrix{V \ar[r]^q \ar[d]_v & U \ar[d]^u \\ W \ar[r]^p & X}
  \]
  such that
  \begin{itemize}
  \item $v$ is quasi-finite, proper and faithfully flat of finite
    presentation;
  \item $p$ is a Nisnevich \'etale covering of finite presentation with separated diagonal; and
  \item $q$ is an \'etale morphism of finite presentation with separated diagonal.
  \end{itemize}
  In addition,
  \begin{enumerate}
  \item\label{T:nis_pres_qff:rep} if $u$ is representable, then it
    can be arranged that $v$ is representable;
  \item\label{T:nis_pres_qff:sep} if $u$ is separated, then it can be
    arranged that $p$ and $q$ are separated and representable; and
  \item\label{T:nis_pres_qff:et} if $u$ is \'etale, then it can be
    arranged that $v$ is \'etale.
  \end{enumerate}
\end{theorem}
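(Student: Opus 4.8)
The plan is to realize $W$ as an open substack of the Hilbert stack of stacky points $\HS{U}{X}^{\mathrm{qfb}}$, in direct analogy with the proof of Theorem~\ref{T:qfdiag_pres} (where $u$ was representable and one used the ordinary Hilbert stack $\HS{U}{X}$) and with \cite[Thm.~6.3]{MR2774654} and \cite[Prop.~6.11]{MR3084720}. First I would apply Theorem~\ref{T:hilbqfb} to the morphism $u\colon U\to X$: since $u$ is of finite presentation with separated diagonal, its diagonal is quasi-compact and separated, so $\HS{U}{X}^{\mathrm{qfb}}$ is algebraic over $X$, locally of finite presentation (as $u$ is), and has quasi-affine diagonal (affine if $u$ is separated). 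I would then let $W \subseteq \HS{U}{X}^{\mathrm{qfb}}$ be the open substack parameterizing those families $Z \to U\times_X T$ that are \'etale over $U\times_X T$; openness is the openness of the \'etale locus of the universal family. On $W$ the universal family $\mathcal{Z}\to U\times_X W$ is \'etale, and I would set $V=\mathcal{Z}$, with $v\colon V\to W$ the structure morphism and $q\colon V\to U$ the composite $\mathcal{Z}\to U\times_X W\to U$.

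With these definitions the required properties are almost formal, modulo one analytic input. By construction $v$ is proper and quasi-finite over the base (this is built into $\HS{U}{X}^{\mathrm{qfb}}$) and of finite presentation; it is flat because $\mathcal{Z}\to U\times_X W$ is \'etale and $U\times_X W\to W$ is flat. Faithful flatness is then arranged by shrinking $W$ to the (open) image of the flat, finitely presented $v$, as in Theorem~\ref{T:qfdiag_pres}. The morphism $q$ is \'etale as a composite of the \'etale family $\mathcal{Z}\to U\times_X W$ with the base change $U\times_X W\to U$ of $p$, once $p$ is known to be \'etale; its separated diagonal follows from representability of $\mathcal{Z}\to U\times_X W$ together with the separated (indeed quasi-affine) diagonal of $p$. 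The separated diagonal of $p$ itself is exactly the quasi-affine diagonal supplied by Theorem~\ref{T:hilbqfb}, and finite presentation of $p$ is obtained after passing to a quasi-compact open of $W$, using that $X$ is quasi-compact and quasi-separated and that $p$ is locally of finite presentation.

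The two substantive points are the \'etaleness and the Nisnevich-covering property of $p\colon W\to X$. For \'etaleness I would argue formally: $p$ is locally of finite presentation, so it suffices to prove it is formally \'etale, and given a square-zero extension $T_0\hookrightarrow T$ over $X$ the base change $U\times_X T_0\hookrightarrow U\times_X T$ is again square-zero because $u$ is flat; since \'etale morphisms lift uniquely along nilpotent thickenings, and the unique lift of a proper quasi-finite family is again proper and quasi-finite over $T$, objects of $W$ lift uniquely, i.e.\ $p$ is formally \'etale. For the Nisnevich property I would, for each $x\in|X|$, restrict to the residual gerbe $\mathcal{G}_x$: there $u_x\colon U_x\to\mathcal{G}_x$ is quasi-finite, flat and of finite type, hence finite (this may be checked after pulling back along a faithfully flat cover $\spec\kappa\to\mathcal{G}_x$ from the spectrum of a field and descending finiteness). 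The identity $U_x\to U_x$, which is \'etale over $U_x$ and finite over $\mathcal{G}_x$, therefore defines a section $\mathcal{G}_x\to W$ of $p$, and I would check it is monomorphic exactly as in Theorem~\ref{T:qfdiag_pres}. This produces the monomorphic sections needed by Proposition~\ref{P:nisn_splitting}; replacing $W$ by a quasi-compact open substack containing the sections of a monomorphic splitting sequence yields a finitely presented Nisnevich \'etale covering, and then shrinking further to make $v$ surjective completes the construction.

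Finally I would treat the three addenda. If $u$ is representable then every qfb family is representable over the base (a representable quasi-finite proper family is finite), so $\HS{U}{X}^{\mathrm{qfb}}=\HS{U}{X}$ and $v$ is representable. If $u$ is \'etale then $U\times_X W\to W$ is \'etale, so $v$, being the composite with the \'etale family $\mathcal{Z}\to U\times_X W$, is \'etale. If $u$ is separated, then $\HS{U}{X}^{\mathrm{qfb}}$ has affine diagonal and I would replace $W$ by (a quasi-compact open of) the open substack $H\subseteq W$ parameterizing open and closed immersions into $U$, which is representable and separated over $X$ when $u$ is separated, exactly as in the separated case of Theorem~\ref{T:qfdiag_pres}; this makes $p$ and $q$ representable and separated. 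I expect the main obstacle to be the \'etaleness of $p$ together with the verification that the section over each $\mathcal{G}_x$ is monomorphic, since this is where the non-representability of the families must be controlled; everything else is bookkeeping with properties stable under composition and base change, now available for the non-representable Hilbert stack through Theorem~\ref{T:hilbqfb}.
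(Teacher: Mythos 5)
Your proposal is correct and follows essentially the same route as the paper: the paper also takes $W=\HS{U}{X}^{\mathrm{\acute{e}t}}$, the open substack of $\HS{U}{X}=\HS{U}{X}^{\mathrm{qfb}}$ (algebraic by Theorem~\ref{T:hilbqfb}) parameterizing \'etale morphisms to $U$, and then argues exactly as in Theorem~\ref{T:qfdiag_pres} --- universal family as $V$, monomorphic sections over residual gerbes via finiteness of $u_x$, shrinking to a quasi-compact open, and replacing $W$ by $\underline{\mathrm{Hilb}}_{U/X}^{\mathrm{open}}$ in the separated case. The details you spell out (formal \'etaleness of $p$, representability of $v$ when $u$ is representable) are exactly the ones the paper leaves implicit by reference to that earlier proof.
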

\begin{proof}
  Argue exactly as in the proof of the first part of Theorem \ref{T:qfdiag_pres}.
  As before we take $W=\HS{U}{X}^{\mathrm{\acute{e}t}}$, the open substack of
  the Hilbert stack $\HS{U}{X}$ parameterizing \'etale morphisms to $U$.
  Since $U\to X$ is quasi-finite, $\HS{U}{X}=\HS{U}{X}^{\mathrm{qfb}}$ is
  algebraic with quasi-affine diagonal (Theorem~\ref{T:hilbqfb}). As before,
  it follows that $W\to X$ is \'etale with quasi-affine, hence separated,
  diagonal. If $u$ is separated, we replace $W$
  with the open substack $\underline{\mathrm{Hilb}}_{U/X}^{\mathrm{open}}$
  which is separated and representable over $X$.
\end{proof}
\begin{remark}\label{R:nonsep2}
If $u$ does not have separated diagonal in Theorem~\ref{T:nis_pres_qff},
then using the Hilbert $2$-stack of Remark~\ref{R:nonsep}, we would
arrive at the conclusion of the Theorem except that
$p$ and $q$ need not have separated diagonals and are merely $2$-representable,
though $v$ is still $1$-representable.
Here $n$-representable means represented by algebraic $n$-stacks.
In particular, $V$ and $W$ are algebraic $2$-stacks.
\end{remark}
\section{Applications}
In this section, we use non-representable \'etale d\'evissage to relax
some separatedness conditions in the approximation results
of~\cite{rydh-2009} and the compact generation results
of~\cite{perfect_complexes_stacks}.
\begin{lemma}\label{L:approximation-of-gerbes}
Let $S$ be a quasi-compact and quasi-separated algebraic stack.  Let $X$ be a
quasi-compact and quasi-separated algebraic stack over $S$ and let $\pi\colon
\stX\to X$ be a proper fppf gerbe. Suppose
$\stX=\varprojlim_{\lambda\in\Lambda} \stX_\lambda$ where $\stX_\lambda$ are algebraic
stacks of finite presentation over $S$ and $g_\lambda\colon \stX\to \stX_\lambda$ are affine
morphisms. Then for all sufficiently large $\lambda$, there is a commutative diagram
\[
\xymatrix{\stX\ar[d]_{\pi}\ar[r]\ar@/^1pc/[rr]^{g_\lambda} & \stX^\circ_\lambda\ar[d]^{\pi_\lambda}\ar[r]_{i_\lambda} & \stX_\lambda\\
X\ar[r] & X^\circ_\lambda\ar@{}[ul]|\square}
\]
where $i_\lambda$ is a finitely presented closed immersion,
$\pi_\lambda$ is a proper fppf gerbe and the square is cartesian.
In particular, $X\to X^\circ_\lambda$ is affine and $X^\circ_\lambda\to S$ is of
finite presentation.
\end{lemma}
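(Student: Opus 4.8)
The plan is to recognize $X$ as a rigidification of $\stX$ along its relative inertia, and then to spread that description out over the given inverse system. The rigidification notation $\thickslash$ available in the source is exactly the tool I would use.

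First I would record the relevant structure of $\pi$. Since $\pi$ is an fppf gerbe, the relative inertia $I_{\stX/X}\to\stX$ is a flat, finitely presented normal subgroup of the inertia $I_{\stX}$, and $\pi$ is canonically identified with the rigidification morphism $\stX\to\stX\thickslash I_{\stX/X}=X$. Because $\pi$ is proper it is separated, so $\Delta_\pi$ is proper; as $I_{\stX/X}\to\stX$ is a base change of $\Delta_\pi$ it is proper as well. Thus $I_{\stX/X}\to\stX$ is proper, flat and of finite presentation, and all of the content of the gerbe $\pi$ is encoded in this group.

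Next I would descend the group to a finite level. Using the limit machinery of \cite{rydh-2009} and the hypotheses $\stX=\varprojlim_\lambda\stX_\lambda$ with affine transition maps and $g_\lambda$ affine, the proper flat finitely presented group $I_{\stX/X}\to\stX$, together with its unit, multiplication, inverse and normality, descends for all sufficiently large $\lambda$ to a proper, flat, finitely presented normal subgroup $I_\lambda$ of the inertia of a finitely presented closed substack $i_\lambda\colon\stX^\circ_\lambda\hookrightarrow\stX_\lambda$ through which $g_\lambda$ factors, in such a way that $I_{\stX/X}$ is recovered by pullback along $\stX\to\stX^\circ_\lambda$. (The closed immersion $i_\lambda$ records the finite stage at which flatness, properness and the group law are actually attained.) I then set $X^\circ_\lambda:=\stX^\circ_\lambda\thickslash I_\lambda$ and let $\pi_\lambda\colon\stX^\circ_\lambda\to X^\circ_\lambda$ be the rigidification. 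Rigidification by a flat finitely presented group preserves algebraicity and finite presentation, so $X^\circ_\lambda\to S$ is of finite presentation, and $\pi_\lambda$ is an fppf gerbe banded by $I_\lambda$; since $I_\lambda$ is proper and properness is fppf-local on the target, $\pi_\lambda$ is a proper fppf gerbe.

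It remains to check compatibility with the limit, which I expect to be the main obstacle: I must show that the square is cartesian and that $X\to X^\circ_\lambda$ is affine. The key point is that formation of the rigidification by a flat finitely presented group commutes with the affine inverse limit at hand and carries the affine transition maps of $\{\stX^\circ_\lambda\}$ to affine transition maps of $\{X^\circ_\lambda\}$; granting this, one obtains $X=\stX\thickslash I_{\stX/X}=\varprojlim_\lambda X^\circ_\lambda$ with affine bonding maps, whence $X\to X^\circ_\lambda$ is affine for large $\lambda$ by the standard limit criterion, and cartesianness follows because pulling $\pi_\lambda$ back along $X\to X^\circ_\lambda$ returns the rigidification quotient of $\stX$, namely $\pi$. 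To establish the commutation with limits and the preservation of affineness, I would argue fppf-locally on the base, where each gerbe trivializes as a projection $X^\circ_\lambda\times B I_\lambda\to X^\circ_\lambda$, reducing both assertions to the known compatibility of inverse limits with the classifying-stack construction $B(-)$ and with affine morphisms, and then descend along the fppf covers.
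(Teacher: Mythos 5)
Your overall skeleton (identify $X$ with the rigidification $\stX\thickslash I_{\stX/X}$, descend the relative inertia to a finite level, rigidify there, then compare) is the same as the paper's, but your key descent step has a genuine gap. You assert that by "the limit machinery of \cite{rydh-2009}" the group $I_{\stX/X}\to\stX$ descends to a proper flat finitely presented subgroup $I_\lambda$ \emph{of the inertia} of a finitely presented closed substack $\stX^\circ_\lambda\subseteq\stX_\lambda$. The difficulty this glosses over is that inertia does not commute with the limit: the canonical map $I_{\stX/S}\to I_{\stX_\lambda/S}\times_{\stX_\lambda}\stX$ is only a closed immersion, not an isomorphism. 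Consequently, standard spreading-out applied to $G=I_{\stX/X}\subseteq I_{\stX/S}\subseteq I_{\stX_\alpha/S}\times_{\stX_\alpha}\stX$ (for a fixed index $\alpha$) produces a flat, proper, finitely presented subgroup $G_\lambda$ of the \emph{pullback} $I_{\stX_\alpha/S}\times_{\stX_\alpha}\stX_\lambda$, and there is no reason for $G_\lambda$ to be contained in $I_{\stX_\lambda/S}$; without that containment the rigidification $\stX_\lambda\thickslash G_\lambda$ does not even make sense. The missing idea, which is the actual content of the paper's proof, is to set $H_\lambda=G_\lambda\cap I_{\stX_\lambda/S}$ and pass to the Weil restriction $\stX^\circ_\lambda:=\mathbf{R}_{G_\lambda/\stX_\lambda}(H_\lambda)$: this is a finitely presented closed substack of $\stX_\lambda$, it is precisely the locus where $G_\lambda$ does land in the inertia, and $g_\lambda$ factors through it because $H_\lambda\times_{\stX_\lambda}\stX\to G_\lambda\times_{\stX_\lambda}\stX$ is an isomorphism. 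Your parenthetical explanation of $i_\lambda$ (that it "records the finite stage at which flatness, properness and the group law are attained") is not correct: those properties are attained by increasing $\lambda$, with no closed substack needed; the closed substack exists solely to repair the failure of the descended group to be a subgroup of inertia.

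A secondary point: your verification of cartesianness and of affineness of $X\to X^\circ_\lambda$ via "rigidification commutes with the affine inverse limit," checked fppf-locally through trivializations $X^\circ_\lambda\times BI_\lambda$, is heavier than necessary and itself incompletely justified (you would first have to organize the $\stX^\circ_\lambda$ and $I_\lambda$ into a compatible inverse system, which your construction does not provide). The paper's route is more economical and works at a single $\lambda$: the universal property of $\stX\to X$ (initial among maps killing $G$ in the inertia) yields $X\to X^\circ_\lambda$; the induced morphism $\stX\to\stX^\circ_\lambda\times_{X^\circ_\lambda}X$ of gerbes over $X$ is stabilizer-preserving, and a stabilizer-preserving morphism of gerbes is an isomorphism, giving the cartesian square; affineness of $X\to X^\circ_\lambda$ then follows by fppf descent, since its pullback along $\stX^\circ_\lambda\to X^\circ_\lambda$ is the affine morphism $\stX\to\stX^\circ_\lambda$. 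You should adopt this argument rather than develop a limit-compatibility statement for rigidifications.
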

\begin{proof}
The map $\pi$ gives an exact sequence of group objects over $\stX$
\[
0\to I_{\stX/X}\to I_{\stX/S}\to \pi^*I_{X/S}.
\]
That $\pi$ is an fppf gerbe of finite presentation implies that $I_{\stX/X}$ is
flat and of finite presentation. Conversely, given a flat subgroup $G\subset
I_{\stX/S}$ of finite presentation, there exists a \emph{rigidification}: an
algebraic stack $\stX\thickslash G$ over $S$ together with an fppf gerbe
$\stX\to \stX\thickslash G$ of finite presentation such that the relative
inertia is $G$~\cite[Thm.~A.1]{MR2427954}.

Let $G=I_{\stX/X}$ and fix an index $\alpha\in \Lambda$. The inertia stack of
$I_{\stX_\alpha/S}$ does not pull-back to $I_{\stX/S}$ but the canonical map
$I_{\stX/S}\to I_{\stX_\alpha/S}\times_{\stX_\alpha} \stX$ is a closed
subgroup stack. Since $G\to \stX$ and $I_{\stX_\alpha/S}\to \stX_\alpha$ are of finite
presentation, there is, by standard approximation methods~\cite[Props.~B.2,
  B.3]{rydh-2009}, an index $\lambda\geq \alpha$ and a subgroup
$G_\lambda\hookrightarrow I_{\stX_\alpha/S}\times_{\stX_\alpha} \stX_\lambda$ of
finite presentation that pulls back to $G\hookrightarrow
I_{\stX_\alpha/S}\times_{\stX_\alpha} \stX$. After
increasing $\lambda$, we may assume that $G_\lambda\to \stX_\lambda$ is
flat and proper~\cite[Prop.~B.3]{rydh-2009}.

We now address the problem that $G_\lambda$ need not be a subgroup of
$I_{\stX_\lambda/S}$. Let $H_\lambda=G_\lambda\cap I_{\stX_\lambda/S}$ as subgroups of
$I_{\stX_\alpha/S}\times_{\stX_\alpha}
\stX_\lambda$. Then $H_\lambda\to G_\lambda$ is a finitely presented closed subgroup and $H_\lambda\times_{\stX_\lambda} \stX\to G_\lambda\times_{\stX_\lambda} \stX$ is an
isomorphism. It follows that the Weil restriction
$\stX^\circ_\lambda := \mathbf{R}_{G_\lambda/\stX_\lambda}(H_\lambda)$ is a finitely presented
closed substack of $\stX_\lambda$ and that $g_\lambda\colon \stX\to \stX_\lambda$ factors
uniquely through $\stX^\circ_\lambda$. Also note that after restricting to $\stX^\circ_\lambda$,
the closed subgroup $H_\lambda\to G_\lambda$ becomes an isomorphism. We thus have
the subgroup $G^\circ_\lambda:=G_\lambda|_{\stX^\circ_\lambda}\to I_{\stX^\circ_\lambda/S}$ which is proper
and flat over $\stX^\circ_\lambda$.

Let $X^\circ_\lambda=\stX^\circ_\lambda\thickslash G^\circ_\lambda$. It remains to prove that we have a
cartesian diagram. Since $\stX\to X$ is initial among maps $\stX\to Y$ such
that $G\hookrightarrow I_{\stX/S}$ factors through $I_{\stX/Y}\hookrightarrow
I_{\stX/S}$, we have a map $X\to X^\circ_\lambda$. This induces a map between gerbes
$\stX\to \stX^\circ_\lambda\times_{X^\circ_\lambda} X$ over $X$. This is a stabilizer-preserving
morphism, i.e., $I_{\stX/X}=G\to I_{\stX^\circ_\lambda/X^\circ_\lambda} \times_{\stX^\circ_\lambda} \stX=G^\circ_\lambda\times_{\stX^\circ_\lambda} \stX$ is an
isomorphism. But a
stabilizer-preserving morphism between gerbes is an isomorphism.
\end{proof}

We can now remove most of the representability assumption
in~\cite[Lemma.~7.9]{rydh-2009}.
\begin{proposition}\label{P:non-rep-et-descent-of-approx}
Let $S$ be a pseudo-noetherian stack and let $X\to S$ be a morphism of algebraic
stacks. Let $W\to X$ be an \'etale surjective morphism of finite presentation with separated diagonal (e.g., representable). If $W\to S$ can be approximated, then so can
$X\to S$.
\end{proposition}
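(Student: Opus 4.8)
The plan is to leverage the étale dévissage machinery---specifically Theorem~\ref{T:qff_devissage}---to reduce the approximation of $X \to S$ to the approximation of $W \to S$, which is assumed. The key observation is that ``can be approximated'' should define a full $2$-subcategory $\mathbf{D}$ of an appropriate $\mathbf{E}$, and that the hypotheses of Theorem~\ref{T:qff_devissage} amount to checking that the class of approximable stacks is closed under the three operations (D1)--(D3). Since $W \to X$ is étale, surjective, of finite presentation with separated diagonal, it is an object of $\mathbf{E} = \STACK_{\mathrm{\sepdiag,fp,\acute{e}t}/X}$ (or the representable variant when $W \to X$ is representable), and it serves as the surjective cover $T' = W \to T = X$ in the theorem. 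The conclusion $X \in \mathbf{D}$ is then exactly the desired statement.

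\textbf{First}, I would fix the correct ambient $2$-category. Working over $S$, define $\mathbf{E} = \STACK_{\mathrm{\sepdiag,fp,\acute{e}t}/X}$ and let $\mathbf{D} \subseteq \mathbf{E}$ consist of those $(Y \to X)$ in $\mathbf{E}$ for which the composite $Y \to S$ can be approximated. The task is to verify conditions (D1), (D2), (D3) for this $\mathbf{D}$. Condition (D1), stability under étale morphisms $Y' \to Y$ with $Y \in \mathbf{D}$, should follow because an étale finitely presented morphism can be promoted through a limit presentation of $Y \to S$: one spreads out $Y' \to Y$ over some finite-presentation approximant $Y_\lambda$, using standard limit methods (\cite[Props.~B.2, B.3]{rydh-2009}) together with the openness of the étale locus. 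Condition (D3), the ``distinguished square'' glueing, should follow from the pushout/descent techniques underlying the paper, since an approximation of $U$, $X'$ compatible over $X \setminus U$ can be glued to an approximation of $X$; this is precisely where the étale-equivalence-over-the-complement hypothesis is used.

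\textbf{The main obstacle} will be condition (D2): given a proper (resp.~finite) surjective $(Y' \to Y) \in \mathbf{E}$ with $Y' \in \mathbf{D}$, one must deduce $Y \in \mathbf{D}$. Here the morphism $Y' \to Y$ is a proper étale gerbe followed by a finite étale morphism (by the factorization remarked after Theorem~\ref{T:qff_devissage}), and the gerbe case is exactly the content of Lemma~\ref{L:approximation-of-gerbes}: a proper fppf gerbe $\stX \to X$ descends an approximation of $\stX \to S$ to an approximation of $X \to S$ by rigidifying along the relative inertia and taking the cartesian diagram produced there. For the finite étale part, descent of approximations along a finite surjective morphism is the substance of the representable dévissage already available in \cite[Thm.~D]{MR2774654} via \cite[Lemma~7.9]{rydh-2009}; the point of the present proposition is precisely to remove representability, which is why Lemma~\ref{L:approximation-of-gerbes} is the crucial new input handling the non-representable (gerbe) direction.

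\textbf{Finally}, having checked (D1)--(D3), I would invoke Theorem~\ref{T:qff_devissage} with the surjective cover $(W \to X) \in \mathbf{E}$ and the hypothesis $W \in \mathbf{D}$ to conclude $X \in \mathbf{D}$, i.e.\ that $X \to S$ can be approximated. When $W \to X$ is representable one uses the representable row of the diagram in the introduction and the ``resp.'' versions throughout; the parenthetical ``(e.g., representable)'' in the statement signals that this is the easier classical case, recovering \cite[Lemma~7.9]{rydh-2009}. The only subtlety I would watch carefully is that ``pseudo-noetherian'' on $S$ guarantees the limit presentations behave well and that the various finite-presentation spreading-out steps in (D1) and (D3) are legitimate.
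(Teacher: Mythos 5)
Your proposal follows essentially the same route as the paper's own proof: apply Theorem~\ref{T:qff_devissage} over $X$ with $\mathbf{D}\subseteq\mathbf{E}=\STACK_{\mathrm{\sepdiag,fp,\acute{e}t}/X}$ the objects approximable over $S$, verify (D1) and (D3) by the limit and étale-glueing results of \cite{rydh-2009}, and handle (D2) by factoring a proper étale surjection into a proper étale gerbe followed by a finite étale morphism, using Lemma~\ref{L:approximation-of-gerbes} for the gerbe and the known finite descent of approximations (the precise reference is \cite[Prop.~2.12(ii)]{rydh-2009}, not Lem.~7.9) for the rest. This is exactly the paper's argument, so the proposal is correct.
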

\begin{proof}
We will apply \'etale d\'evissage (Theorem~\ref{T:qff_devissage}).  Let
$\mathbf{D} \subseteq \mathbf{E}=\STACK_{\mathrm{\sepdiag,fp,\acute{e}t}/S}$ be the full
subcategory of morphisms $Y\to X$ such that $Y\to S$ is of strict approximation
type or, equivalently, has an approximation~\cite[Prop.~4.8]{rydh-2009}. Then
(D1) is satisfied by definition; (D2) for finite morphisms is~\cite[Prop.~2.12
  (ii)]{rydh-2009} and (D3) is~\cite[Lem.~7.8]{rydh-2009}. It remains to prove
(D2) for proper non-representable morphisms. Thus, let $Y'\to Y$ be a proper
\'etale surjective morphism in $\mathbf{E}$. There is a canonical factorization
$Y'\to Y''\to Y$ where the first morphism is an \'etale gerbe and the second
is finite \'etale. It is thus enough to prove (D2) when $Y'\to Y$ is a proper
\'etale gerbe.

By assumption $Y'\to S$ has an approximation and can thus be written as
$Y'=\varprojlim_\lambda {Y'_\lambda}$ where $Y'_\lambda\to S$ are of finite
presentation and $Y'\to Y'_\lambda$ is affine for every $\lambda$. By
Lemma~\ref{L:approximation-of-gerbes} we have a cartesian diagram
\[
\xymatrix{Y'\ar[d]\ar[r] & Y'^\circ_\lambda\ar[d]\\
Y\ar[r] & Y^\circ_\lambda.\ar@{}[ul]|\square}
\]
of algebraic stacks over $S$ where $Y\to Y^\circ_\lambda$ is affine and $Y^\circ_\lambda\to
S$ is of finite presentation. Thus, $Y\to S$ has an approximation.
\end{proof}

In~\cite{rydh-2009} it is shown that quasi-compact algebraic stacks with
quasi-finite and \emph{locally separated} diagonal can be approximated and are
pseudo-noetherian. We can now remove the locally separatedness assumption.

\begin{corollary}
Let $X$ be a quasi-compact algebraic stack with quasi-finite and
quasi-separated diagonal. Then $X\to \spec \Z$ has an approximation.
In particular, $X$ is pseudo-noetherian.
\end{corollary}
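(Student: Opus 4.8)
The plan is to apply the \'etale d\'evissage machinery, specifically Theorem~\ref{T:qff_devissage} in the form relevant to stacks with quasi-finite and quasi-separated diagonal. Let $X$ be a quasi-compact algebraic stack with quasi-finite and quasi-separated diagonal. The goal is to realize $X$ as built up, via Nisnevich-type operations, from pieces that are already known to admit approximations and to be pseudo-noetherian. The natural choice of ambient $2$-category is $\mathbf{E} = \STACK_{\mathrm{\sepdiag,fp,\acute{e}t}/S}$ with $S = X$ itself (or $S = \spec\Z$), and the class $\mathbf{D}$ should consist of those $Y\to X$ in $\mathbf{E}$ such that $Y\to \spec\Z$ admits an approximation. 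The work of Proposition~\ref{P:non-rep-et-descent-of-approx} has essentially verified that this $\mathbf{D}$ satisfies the d\'evissage conditions (D1)--(D3), so the real content is finding an appropriate presentation $T'\to X$ lying in $\mathbf{D}$ to which we can apply the theorem.

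First I would apply Theorem~\ref{T:qfdiag_pres} to $X$: since $X$ is quasi-compact, quasi-separated, and has quasi-finite diagonal, there exist
\[
V \xrightarrow{v} W \xrightarrow{p} X
\]
with $V$ an affine scheme, $v$ finite faithfully flat of finite presentation, and $p$ a finitely presented Nisnevich covering with separated diagonal. The point of this factorization is that $V$ is an honest affine scheme and hence trivially admits an approximation and is pseudo-noetherian, while $v\colon V\to W$ is finite, faithfully flat, and surjective. I would then invoke the finite-morphism case of condition (D2) (which is~\cite[Prop.~2.12~(ii)]{rydh-2009}, as recorded in the proof of Proposition~\ref{P:non-rep-et-descent-of-approx}) to conclude from $V\in\mathbf{D}$ that $W\in\mathbf{D}$, i.e.\ $W\to\spec\Z$ admits an approximation.

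Next I would transfer approximability from $W$ back to $X$ along the Nisnevich covering $p$. This is exactly the situation of Proposition~\ref{P:non-rep-et-descent-of-approx}: $p\colon W\to X$ is an \'etale surjective morphism of finite presentation with separated diagonal, and $W\to\spec\Z$ is approximable by the previous step, so $X\to\spec\Z$ is approximable. (Alternatively, one applies Theorem~\ref{T:qff_devissage} directly with $T'=W$, $T=X$, and the surjective Nisnevich covering $p\in\mathbf{E}$, having checked that $\mathbf{D}$ satisfies (D1)--(D3) via Proposition~\ref{P:non-rep-et-descent-of-approx}.) The final assertion that $X$ is pseudo-noetherian then follows formally, since having an approximation over $\spec\Z$ is precisely what pseudo-noetherianness packages, and the class of approximable stacks is stable under the relevant operations.

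The main obstacle is entirely concentrated in the non-representable part of (D2), namely descent of approximability along \emph{proper \'etale gerbes}, which is where the new input beyond~\cite{rydh-2009} is needed. This is handled by Lemma~\ref{L:approximation-of-gerbes}: the gerbe $\stX\to X$ is realized as a base change of a finitely presented gerbe $\stX^\circ_\lambda\to X^\circ_\lambda$ with $X\to X^\circ_\lambda$ affine and $X^\circ_\lambda\to S$ of finite presentation, which produces the approximation of the base from that of the total space. Everything else reduces either to the classical finite-descent statement or to the structural presentation theorem, so once (D2) for gerbes is secured the argument is a formal assembly. I would therefore expect the proof to be short, with the genuine difficulty already absorbed into Lemma~\ref{L:approximation-of-gerbes} and Theorem~\ref{T:qfdiag_pres}.
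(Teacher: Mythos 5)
Your argument is correct and follows essentially the same route as the paper: apply Theorem~\ref{T:qfdiag_pres} to obtain $V\to W\to X$ with $V$ affine, deduce that $W$ is approximable by finite flat descent (\cite[Prop.~2.12~(ii)]{rydh-2009}), and then conclude for $X$ via Proposition~\ref{P:non-rep-et-descent-of-approx}. The extra framing through Theorem~\ref{T:qff_devissage} and Lemma~\ref{L:approximation-of-gerbes} is accurate but already packaged inside Proposition~\ref{P:non-rep-et-descent-of-approx}, so no new input is needed.
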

\begin{proof}
By Theorem~\ref{T:qfdiag_pres}, there is an \'etale surjective morphism $W\to
X$ of finite presentation with separated diagonal (a Nisnevich cover) and a finite faithfully flat
morphism $V\to W$ of finite presentation where $V$ is an affine scheme.  We
conclude that $W$ has an approximation by~\cite[Prop.~2.12 (ii)]{rydh-2009} and
that $X$ has an approximation by
Proposition~\ref{P:non-rep-et-descent-of-approx}.
\end{proof}
We can also establish the following improvement of \cite[Thm.~A]{perfect_complexes_stacks} in equicharacteristic~$0$, where it was proved for stacks with quasi-finite and separated diagonal.
\begin{theorem}\label{T:compact-gen}
    Let $X$ be a quasi-compact and quasi-separated Deligne--Mumford stack of \emph{equicharacteristic $0$}. Then the unbounded derived category $\DQCOH(X)$, of
  $\Orb_X$-modules with quasi-coherent cohomology, is
  compactly generated by a single perfect complex. Moreover, for every quasi-compact open subset $U \subseteq X$, there exists a compact perfect complex with support exactly $X\setminus U$. 
\end{theorem}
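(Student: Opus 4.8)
The plan is to deduce the theorem from the Induction principle, Theorem~\ref{T:induction_qf_diag}, applied with $S=X$ and $\mathbf{E}=\STACK_{\mathrm{\sepdiag,fp,\acute{e}t}/X}$ (the Deligne--Mumford case). Call an object $(Y\to X)\in\mathbf{E}$ \emph{crisp} if $Y$ is concentrated and, for every quasi-compact open $V\subseteq Y$, the full subcategory $\DCAT_{\qcsubscript,Y\setminus V}(Y)\subseteq\DQCOH(Y)$ of complexes with cohomology supported on $Y\setminus V$ is compactly generated by a single perfect complex that is compact in $\DQCOH(Y)$. Let $\mathbf{D}\subseteq\mathbf{E}$ be the full $2$-subcategory of crisp objects. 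Taking $V=\emptyset$ recovers the first assertion of the theorem, and a generator for general $V$ is a compact perfect complex with support exactly $Y\setminus V$; hence crispness of $X$ is precisely the statement to be proved, and it suffices to verify (I1)--(I3) and invoke Theorem~\ref{T:induction_qf_diag} to conclude $X\in\mathbf{D}$. Note already here that in equicharacteristic~$0$ the finite stabilizers of every such $Y$ are linearly reductive, so each $Y$ is automatically concentrated and the compact objects of $\DQCOH(Y)$ coincide with the perfect complexes; this is the first of two places where the characteristic hypothesis enters.

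Conditions (I1) and (I2) are the routine ones. For (I1), if $Y'\subseteq Y$ is a quasi-compact open immersion and $Y$ is crisp, then for a quasi-compact open $V\subseteq Y'$ (which is also quasi-compact open in $Y$) a compact generator of $\DCAT_{\qcsubscript,Y\setminus V}(Y)$ restricts, by Thomason--Neeman localization, to a compact generator of $\DCAT_{\qcsubscript,Y'\setminus V}(Y')$, so $Y'$ is crisp. For the base case (I2), let $Y'\to Y$ be finite and surjective with $Y'$ an affine scheme. Descending affineness along the finite surjective morphism $Y'\to Y$ shows that $Y$ has affine diagonal; thus $Y$ is separated, has finite inertia, and---by the Keel--Mori theorem---an affine coarse moduli space. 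In equicharacteristic~$0$ such a $Y$ is a tame stack of global type with separated diagonal, so its crispness is exactly the separated-diagonal case already established in \cite[Thm.~A]{perfect_complexes_stacks}.

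The essential point is the distinguished-square condition (I3): given an open immersion $i\colon U\hookrightarrow Y$ and an \'etale morphism $f\colon Y'\to Y$ that is an isomorphism over $Y\setminus U$, with $U$ and $Y'$ crisp, I must glue their generators into a compact perfect generator of $\DCAT_{\qcsubscript,Y\setminus V}(Y)$ for each quasi-compact open $V\subseteq Y$. The mechanism is \'etale excision: writing $Z=\red{(Y\setminus U)}$, the morphism $f$ restricts to an isomorphism over $Z$, so $Rf_*$ induces an equivalence between complexes on $Y'$ supported over $f^{-1}(Z)$ and complexes on $Y$ supported on $Z$, while crispness of $U$ controls the part of the support meeting $U$; a Mayer--Vietoris (Thomason) gluing then assembles the two generators into a single one on $Y$. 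The genuinely new difficulty is that $f$ need not be representable: it factors canonically as $Y'\to Y''\to Y$ with $Y'\to Y''$ an \'etale gerbe and $Y''\to Y$ representable \'etale, and the gluing requires that pushforward along this gerbe preserve perfect complexes and compact objects.

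This gerbe pushforward is where I expect the main obstacle, and it is the second and decisive place where equicharacteristic~$0$ is indispensable. An \'etale gerbe in characteristic~$0$ is banded by a finite linearly reductive group, hence is cohomologically affine and tame; its pushforward is then $t$-exact, preserves perfection, and sends compact objects to compact objects, so the excision equivalence and the Mayer--Vietoris gluing go through with compactness preserved at every stage. In positive or mixed characteristic the structure sheaf of such a gerbe is not perfect---its higher group cohomology fails to vanish---and the argument collapses, which is precisely why the hypothesis is equicharacteristic~$0$. Once (I1)--(I3) are verified, Theorem~\ref{T:induction_qf_diag} yields $\mathbf{D}=\mathbf{E}$, and in particular $X$ is crisp, which is the assertion of the theorem.
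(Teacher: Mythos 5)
Your proposal follows the paper's proof essentially verbatim: both deduce the theorem from Theorem~\ref{T:induction_qf_diag} with $\mathbf{E}=\STACK_{\mathrm{\sepdiag,fp,\acute{e}t}/X}$ and $\mathbf{D}$ the crisp objects, verify (I1) by localization, (I2) via \cite[Thm.~A]{perfect_complexes_stacks}, and (I3) by the \'etale-excision and Thomason-gluing machinery of \cite[\S\S 5--6, Prop.~6.8]{perfect_complexes_stacks}, with equicharacteristic~$0$ entering exactly where you say it does, namely to guarantee that all stacks and morphisms involved (in particular the \'etale gerbe through which a non-representable cover factors) are concentrated. The only cosmetic difference is that the paper packages the non-representability issue as enlarging the class $\mathcal{D}$ of \cite[Ex.~5.2]{perfect_complexes_stacks} to the $2$-category $\mathbf{E}$ and checking admissibility of the resulting presheaf, rather than isolating the gerbe factorization explicitly as you do.
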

\begin{proof}
  We apply Theorem \ref{T:induction_qf_diag}: let
  $\mathbf{D} \subseteq
  \mathbf{E}=\STACK_{\mathrm{\sepdiag,fp,\acute{e}t}/X}$ be the
  full subcategory consisting of those morphisms of Deligne--Mumford
  stacks $(W \to X)$, where for every quasi-compact open immersion
  $V \subseteq W$ we have that $V$ satisfies the conclusion of the
  Theorem. This makes condition (I1) a triviality. Condition (I2)
  follows immediately from
  \cite[Thm.~A]{perfect_complexes_stacks}. For Condition (I3) we use
  the theory developed in \cite[\S\S 5-6]{perfect_complexes_stacks},
  with the following minor changes. In
  \cite[Ex.~5.2]{perfect_complexes_stacks}, the working example
  throughout those sections, they take $\mathcal{D}$ to consist of
  representable and finitely presented morphisms to $X$; we will take
  $\mathcal{D} = \mathbf{E}$. The main difference is that
  $\mathcal{D}$ is now a $2$-category, but the results go through
  without change. Since all morphisms of Deligne--Mumford stacks in
  equicharacteristic $0$ are concentrated (combine
  \cite[Lem.~2.5(2)]{perfect_complexes_stacks} with
  \cite[Thm.~C]{hallj_dary_alg_groups_classifying}), the resulting
  $(\mathcal{L},\mathcal{D})$-presheaf of triangulated categories is
  admissible in the sense of
  \cite[Defn.~6.1]{perfect_complexes_stacks}; also see
  \cite[Ex.~6.2]{perfect_complexes_stacks} for further details and
  notations. Condition (I3) now follows from
  \cite[Prop.~6.8]{perfect_complexes_stacks}.
\end{proof}
\begin{corollary}
  If $X$ is a noetherian Deligne--Mumford stack of equicharacteristic
  $0$, then there is an equivalence of categories:
  \[
    \DCAT(\QCOH(X)) \to \DQCOH(X). 
  \]
\end{corollary}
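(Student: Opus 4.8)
The plan is to deduce the equivalence formally from the compact generation established in Theorem~\ref{T:compact-gen}. First I would record the basic structure of the comparison functor. Since $X$ is a noetherian Deligne--Mumford stack, $\QCOH(X)$ is a Grothendieck abelian category, so $\DCAT(\QCOH(X))$ has all small coproducts, admits K-injective resolutions, and is well generated. The inclusion $\QCOH(X)\hookrightarrow \MOD(\Orb_X)$ induces the natural functor $\Psi\colon \DCAT(\QCOH(X))\to \DQCOH(X)$; it lands in $\DQCOH(X)$ because the cohomology sheaves of a complex of quasi-coherent sheaves are quasi-coherent, and it preserves small coproducts. The aim is to show $\Psi$ is an equivalence, and the only substantive external input will be the compact generation supplied by Theorem~\ref{T:compact-gen}.

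The strategy is a reconstruction from compact objects in the spirit of B\"okstedt--Neeman. By Theorem~\ref{T:compact-gen}, $\DQCOH(X)$ is compactly generated by a single perfect complex $P$. On a noetherian stack a perfect complex has bounded, coherent cohomology and is therefore quasi-isomorphic to a bounded complex of coherent sheaves; this provides a lift $\tilde P\in \DCAT(\QCOH(X))$ with $\Psi\tilde P\cong P$. As the source is well generated and $\Psi$ preserves coproducts, $\Psi$ has a right adjoint by Brown representability. I would then reduce the problem to two assertions: that $\Psi$ carries $\tilde P$ to the compact generator $P$, which is immediate, and that $\Psi$ is fully faithful on $\tilde P$, i.e.\ that the maps $\Hom_{\DCAT(\QCOH(X))}(\tilde P,\tilde P[n])\to \Hom_{\DQCOH(X)}(P,P[n])$ are isomorphisms for all $n$. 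Granting the second assertion, the standard formalism upgrades full faithfulness on the generator to full faithfulness of $\Psi$, and essential surjectivity follows because $P=\Psi\tilde P$ generates $\DQCOH(X)$ as a localizing subcategory.

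The hard part will be this fully-faithful comparison, which amounts to matching the quasi-coherent $\Ext$-groups of $\tilde P$ with the $\Orb_X$-module $\Hom$-groups of $P$ computed in $\DQCOH(X)$. I would reduce it to showing that the global sections of the perfect complex $\SRHOM(P,P)$ yield the same hypercohomology whether derived in $\QCOH(X)$ or in $\MOD(\Orb_X)$. This is exactly where equicharacteristic $0$ enters: as recorded in the proof of Theorem~\ref{T:compact-gen}, every morphism of Deligne--Mumford stacks in equicharacteristic $0$ is concentrated, so $\mathrm{R}\Gamma$ has finite cohomological dimension and commutes with small coproducts, which forces the two computations to agree and hence the comparison map to be an isomorphism. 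In practice this entire package is available as a general equivalence criterion once $\DQCOH(X)$ is known to be compactly generated (cf.\ the methods of \cite{perfect_complexes_stacks}), so the corollary follows by combining that criterion with Theorem~\ref{T:compact-gen}.
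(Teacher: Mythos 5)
Your overall strategy is the same as the paper's: the corollary is deduced formally from the compact generation in Theorem~\ref{T:compact-gen} together with a general comparison criterion. The paper's proof is literally a two-citation combination --- Theorem~\ref{T:compact-gen} plus \cite[Thm.~1.2]{hallj_neeman_dary_no_compacts}, which states that for a suitable noetherian stack with $\DQCOH(X)$ compactly generated the natural functor $\Psi\colon\DCAT(\QCOH(X))\to\DQCOH(X)$ is an equivalence --- so your closing sentence, which defers to exactly such a criterion, lands on the intended proof (though the criterion lives in \cite{hallj_neeman_dary_no_compacts}, not in \cite{perfect_complexes_stacks}).

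However, if your in-line sketch of the criterion is meant to carry the argument on its own, it has a genuine gap at the bootstrapping step. You verify that $\Hom_{\DCAT(\QCOH(X))}(\tilde P,\tilde P[n])\to\Hom_{\DQCOH(X)}(P,P[n])$ is an isomorphism and then assert that ``the standard formalism upgrades full faithfulness on the generator to full faithfulness of $\Psi$.'' That upgrade requires $\tilde P$ to be a \emph{compact generator of the source} $\DCAT(\QCOH(X))$: compactness of $\tilde P$ there is needed to pass $\Hom(\tilde P,-)$ through the localizing subcategory generated by $\tilde P$ in the second variable, and generation is needed to conclude in the first variable. Neither is automatic --- knowing that $P$ is a compact generator of $\DQCOH(X)$ says nothing a priori about $\tilde P$ in $\DCAT(\QCOH(X))$, and indeed the ``negative results'' of \cite{hallj_neeman_dary_no_compacts} exhibit stacks where $\DCAT(\QCOH(X))$ has essentially no compact objects. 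Establishing compactness and generation of $\tilde P$ in the source is the real content of the cited theorem (and is where the finite-cohomological-dimension input you correctly attribute to equicharacteristic $0$ actually gets used, not merely in comparing the two $\Ext$-computations for $\tilde P$ against itself). So either cite the criterion and stop, or fill in this step; as written the sketch assumes the hard part.
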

\begin{proof}
  Combine Theorem \ref{T:compact-gen} with \cite[Thm.~1.2]{hallj_neeman_dary_no_compacts}.
\end{proof}
\begin{remark}\label{R:application_LUNA}
  If $p\colon W \to X$ is a morphism of algebraic stacks and $W$ has
  separated diagonal, then $p$ has separated diagonal. This means that
  the \'etale presentations appearing in
  \cite{AHR_lunafield,etale_local_stacks} always have separated
  diagonal.
\end{remark}
\bibliography{references}

\providecommand{\MR}{\relax\ifhmode\unskip\space\fi MR }
% \MRhref is called by the amsart/book/proc definition of \MR.
\providecommand{\MRhref}[2]{%
  \href{http://www.ams.org/mathscinet-getitem?mr=#1}{#2}
}
\providecommand{\href}[2]{#2}
\begin{thebibliography}{Ryd11b}

\bibitem[AHR14]{etale_local_stacks}
J.~Alper, J.~Hall, and D.~Rydh, \emph{The \'etale local structure of algebraic
  stacks}, in preparation, 2014.

\bibitem[AHR15]{AHR_lunafield}
J.~Alper, J.~Hall, and D.~Rydh, \emph{{A Luna \'etale slice theorem for
  algebraic stacks}}, April 2015,
  \href{http://arXiv.org/abs/1504.06467}{\mbox{arXiv:1504.06467}}.

\bibitem[AOV08]{MR2427954}
D.~Abramovich, M.~Olsson, and A.~Vistoli, \emph{Tame stacks in positive
  characteristic}, Ann. Inst. Fourier (Grenoble) \textbf{58} (2008), no.~4,
  1057--1091.

\bibitem[Hal16]{hall_balmer_cms}
J.~Hall, \emph{The {B}almer spectrum of a tame stack}, Ann. K-Theory \textbf{1}
  (2016), no.~3, 259--274.

\bibitem[Hal17]{MR3589351}
J.~Hall, \emph{Openness of versality via coherent functors}, J. Reine Angew.
  Math. \textbf{722} (2017), 137--182.

\bibitem[HK17]{2017arXiv170502295H}
M.~{Hoyois} and A.~{Krishna}, \emph{{Vanishing theorems for the negative
  K-theory of stacks}}, May 2017,
  \href{http://arXiv.org/abs/1705.02295}{\mbox{arXiv:1705.02295}}.

\bibitem[HK{\O}15]{MR3431674}
J.~Heller, A.~Krishna, and P.~A. {\O}stv{\ae}r, \emph{Motivic homotopy theory
  of group scheme actions}, J. Topol. \textbf{8} (2015), no.~4, 1202--1236.

\bibitem[HNR17]{hallj_neeman_dary_no_compacts}
J.~Hall, A.~Neeman, and D.~Rydh, \emph{One positive and two negative results
  for derived categories of algebraic stacks}, J. Inst. Math. Jussieu (2017),
  to appear.

\bibitem[HR14]{MR3148551}
J.~Hall and D.~Rydh, \emph{The {H}ilbert stack}, Adv. Math. \textbf{253}
  (2014), 194--233.

\bibitem[HR15a]{hallj_dary_alg_groups_classifying}
J.~Hall and D.~Rydh, \emph{Algebraic groups and compact generation of their
  derived categories of representations}, Indiana Univ. Math. J. \textbf{64}
  (2015), no.~6, 1903--1923.

\bibitem[HR15b]{hallj_dary_g_hilb_quot}
J.~Hall and D.~Rydh, \emph{General {H}ilbert stacks and {Q}uot schemes},
  Michigan Math. J. \textbf{64} (2015), no.~2, 335--347.

\bibitem[HR17]{perfect_complexes_stacks}
J.~Hall and D.~Rydh, \emph{Perfect complexes on algebraic stacks}, Compositio
  Math. \textbf{153} (2017), no.~11, 2318--2367.

\bibitem[KM97]{MR1432041}
S.~Keel and S.~Mori, \emph{Quotients by groupoids}, Ann. of Math. (2)
  \textbf{145} (1997), no.~1, 193--213.

\bibitem[K{\O}12]{MR2922391}
A.~Krishna and P.~A. {\O}stv{\ae}r, \emph{Nisnevich descent for {$K$}-theory of
  {D}eligne-{M}umford stacks}, J. K-Theory \textbf{9} (2012), no.~2, 291--331.

\bibitem[Lie06]{MR2233719}
M.~Lieblich, \emph{Remarks on the stack of coherent algebras}, Int. Math. Res.
  Not. (2006), Art. ID 75273, 12.

\bibitem[LMB]{MR1771927}
G.~Laumon and L.~Moret-Bailly, \emph{Champs alg\'ebriques}, Ergebnisse der
  Mathematik und ihrer Grenzgebiete. 3. Folge., vol.~39, Springer-Verlag,
  Berlin, 2000.

\bibitem[LS08]{MR2369042}
C.~Lundkvist and R.~Skjelnes, \emph{Non-effective deformations of
  {G}rothendieck's {H}ilbert functor}, Math. Z. \textbf{258} (2008), no.~3,
  513--519.

\bibitem[MV99]{MR1813224}
F.~Morel and V.~Voevodsky, \emph{{${\bf A}^1$}-homotopy theory of schemes},
  Inst. Hautes \'Etudes Sci. Publ. Math. (1999), no.~90, 45--143 (2001).

\bibitem[Nis89]{MR1045853}
Y.~Nisnevich, \emph{The completely decomposed topology on schemes and
  associated descent spectral sequences in algebraic {$K$}-theory}, Algebraic
  {$K$}-theory: connections with geometry and topology ({L}ake {L}ouise, {AB},
  1987), NATO Adv. Sci. Inst. Ser. C Math. Phys. Sci., vol. 279, Kluwer Acad.
  Publ., Dordrecht, 1989, pp.~241--342.

\bibitem[Ols07]{MR2357471}
M.~Olsson, \emph{A boundedness theorem for {H}om-stacks}, Math. Res. Lett.
  \textbf{14} (2007), no.~6, 1009--1021.

\bibitem[RG71]{MR0308104}
M.~Raynaud and L.~Gruson, \emph{Crit\`eres de platitude et de projectivit\'e.
  {T}echniques de ``platification'' d'un module}, Invent. Math. \textbf{13}
  (1971), 1--89.

\bibitem[Ryd11a]{MR2774654}
D.~Rydh, \emph{\'{E}tale d\'evissage, descent and pushouts of stacks}, J.
  Algebra \textbf{331} (2011), 194--223.

\bibitem[Ryd11b]{MR2821738}
D.~Rydh, \emph{Representability of {H}ilbert schemes and {H}ilbert stacks of
  points}, Comm. Algebra \textbf{39} (2011), no.~7, 2632--2646.

\bibitem[Ryd13]{MR3084720}
D.~Rydh, \emph{Existence and properties of geometric quotients}, J. Algebraic
  Geom. \textbf{22} (2013), no.~4, 629--669.

\bibitem[Ryd15]{rydh-2009}
D.~Rydh, \emph{Noetherian approximation of algebraic spaces and stacks}, J.
  Algebra \textbf{422} (2015), 105--147.

\bibitem[Ryd16]{rydh-2014}
D.~Rydh, \emph{Approximation of sheaves on algebraic stacks}, Int. Math. Res.
  Not. \textbf{2016} (2016), no.~3, 717--737.

\bibitem[Stacks]{stacks-project}
The {Stacks Project Authors}, \emph{{S}tacks {P}roject},
  \url{http://stacks.math.columbia.edu}.

\end{thebibliography}
\bibliographystyle{bibstyle}
\end{document}